\definecolor{codegreen}{rgb}{0,0.6,0}
\definecolor{codepurple}{rgb}{0.58,0,0.82}
\lstdefinestyle{mystyle}{
language={},
basicstyle=\footnotesize\ttfamily,
xleftmargin=2em,
xrightmargin=2em,
columns=fullflexible,
keepspaces=true,         
classoffset=2,
morekeywords={minors, genericMatrix, matrix, ideal, eliminate, map, degrees, random},
keywordstyle={\color{blue}\bfseries},
classoffset=3,
morekeywords={for,from, to, do},
keywordstyle={\color{codepurple}\bfseries},
classoffset=4,
morekeywords={Height},
keywordstyle={\color{Emerald}\bfseries},
classoffset=5,
morekeywords={QQ, ZZ},
keywordstyle={\color{codegreen}\bfseries},
stepnumber=1,
numbers=left,
captionpos=b,
showspaces=false,
showstringspaces=false,
morestring=[b]",
frame=single,
}
\title{Rigid Multiview Varieties}
\author{Michael Joswig \and Joe Kileel \and Bernd Sturmfels \and Andr\'e Wagner}
\def\cocoa{{\hbox{\rm C\kern-.13em o\kern-.07em C\kern-.13em o\kern-.15em A}}}
\newcommand{\QQ}{\mathbb{Q}}
\newcommand{\RR}{\mathbb{R}}
\newcommand{\CC}{\mathbb{C}}
\newcommand{\ZZ}{\mathbb{Z}}
\newcommand{\PP}{\mathbb{P}}
\DeclareMathOperator*{\rk}{rk} 
\DeclareMathOperator{\Sym}{Sym}
\DeclareMathOperator{\GL}{GL}
\newcommand\macaulay{\texttt{Macaulay2}\xspace}
\newcommand\gfan{\texttt{Gfan}\xspace}
\newcommand{\inD}[1][\relax]{\def\argone{#1}\def\temprelax{\relax}
  \ifx\argone\temprelax\right.\else\,\middle|#1\right.{}\fi}
\theoremstyle{plain}
\newtheorem{thm}{Theorem}
\newtheorem{lem}[thm]{Lemma}
\newtheorem{prop}[thm]{Proposition}
\newtheorem{cor}[thm]{Corollary}
\newtheorem{conj}[thm]{Conjecture}
\theoremstyle{definition}
\newtheorem{exmp}[thm]{Example}
\theoremstyle{remark}
\newtheorem{rem}[thm]{Remark}
\begin{document} 
 
\begin{abstract} \noindent
The multiview variety from 
computer vision is generalized to 
images by $n$ cameras of points
linked by a distance constraint.
The resulting five-dimensional
variety  lives in a product of
$2n$ projective planes. We determine defining 
polynomial equations,
and we explore generalizations of this variety
to scenarios of interest in applications.
\end{abstract}

\subjclass[2010]{14M99 (68T45)}
\keywords{computer vision; distance constraints; algebraic varieties}

\maketitle

\section{Introduction} \label{sec:one}

The emerging field of Algebraic Vision is concerned with interactions between computer vision and algebraic geometry.
A central role in this endeavor is played by projective varieties that arise in multiview geometry~\cite{HZ}.

The set-up is as follows:
 A \emph{camera} is a linear map from the three-dimensional projective space $\PP^3$ to the
 projective plane $\PP^2$, both over $\RR$.
 We represent $n$ cameras by matrices
$A_1,A_2,\ldots,A_n\in \RR^{3\times4}$ of rank~$3$.
The kernel of $A_j$ is the {\em focal point} $f_j \in \PP^3$.
Each image point 
$u_j \in \PP^2$ of camera $A_j$ has a line through $f_j$ as its fiber
in $\PP^3$. This is the \emph{back-projected line}.

We assume throughout
that the focal points of the $n$ cameras are 
in  {\em general position},
i.e.~all distinct, no three on a line, and no four on a plane.
Let $\beta_{jk}$ denote the line in $\PP^3$ spanned by
the focal points $f_j$ and $f_k$.
This is the \emph{baseline} of the camera pair $A_j, A_k$.
   The image of the
focal point $f_j$ in the image plane $\PP^2$ of the camera $A_k$ is the \emph{epipole} $e_{k\leftarrow j}$.  Note that the
baseline $\beta_{jk}$ is the back-projected line of $e_{k\leftarrow j}$ with respect to $A_j$ and also the back-projected line of $e_{j\leftarrow k}$ with respect to $A_k$.  See Figure \ref{fig:2view} for a sketch.

\begin{figure}[th]
  \includegraphics[width=\textwidth]{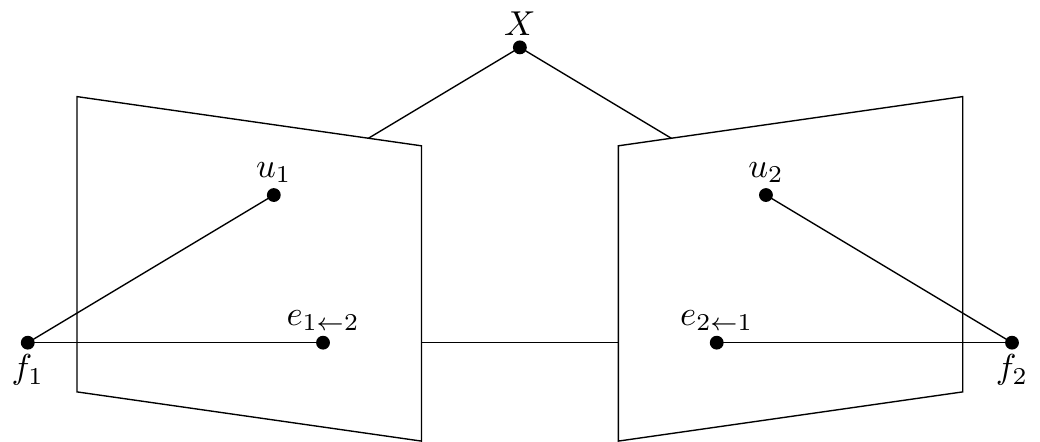}%
   \caption{Two-view geometry}
  \label{fig:2view}
\end{figure}

Fix a point $X$ in $\PP^3$ which is not on the baseline $\beta_{jk}$,
and let $u_j$ and $u_k$ be the images of $X$ under $A_j$ and $A_k$.
 Since $X$ is not on the baseline, neither image
point is the epipole for the other camera.  The two back-projected lines of $u_j$ and $u_k$ meet in a unique
point, which is $X$.  This process of reconstructing $X$ from two images $u_j$
and $u_k$ is called \emph{triangulation} \cite[\S9.1]{HZ}.

The triangulation procedure amounts to solving the linear equations
\begin{equation}\label{eq:triangulation}
\quad  B^{jk} \begin{bmatrix}\,X\\-\lambda_j\\-\lambda_k\end{bmatrix} \, = \, 0
\qquad \hbox{where} \quad
B^{jk} \,=\, \begin{bmatrix} A_j & u_j & 0 \\ A_k & 0 & u_k\end{bmatrix} \,\in \,\RR^{6 \times 6}.
\end{equation}
For general data we have ${\rm rank}(B^{jk}) = 
{\rm rank}(B^{jk}_1) = \! \cdots \! = {\rm rank}(B^{jk}_6)  = 5$,
where $B^{jk}_i$ is obtained  from $B^{jk}$
by deleting the $i$th row.
{\em Cramer's Rule}  can be used to recover $X$.
Let $\wedge_5 B_i^{jk} \in \RR^6$ be the column vector formed by the
signed maximal minors of $B^{jk}_i$.  Write 
$\widetilde{\wedge}_5 B_i^{jk} \in \RR^4$ for the first four coordinates
of $\wedge_5 B_i^{jk}$. These are bilinear functions of
$u_j$ and $u_k$. They yield
\begin{equation}\label{eq:X}
  X \, = \, \widetilde{\wedge}_5 B_1^{jk} \, = \, \widetilde{\wedge}_5 B_2 ^{jk} \, = \, \cdots \, = \,
  \widetilde{\wedge}_5 B_6^{jk}.
\end{equation}
We note that, in most practical applications,
the data $u_1,\ldots,u_n$ will be noisy,
in which case  triangulation 
requires techniques from optimization \cite{AAT}.

The \emph{multiview variety} $V_A$ of the camera configuration $A = (A_1,\dots,A_n)$
was defined in \cite{AST} as the closure of the image of the rational map 
\begin{equation}
  \label{eq:multiMap}
  \begin{matrix}
    \phi_{A}:&  \PP^3  & \dashrightarrow &  \PP^2 \times \PP^2 \times \cdots \times \PP^2, \\
&    X & \mapsto&  (A_1X, A_2X, \ldots, A_nX).
  \end{matrix}
  \end{equation}
The points $(u_1,u_2,\ldots,u_n) \in V_A$ are the 
consistent views in $n$ cameras.
The prime ideal $I_A$ of $V_A$ was determined in \cite[Corollary 2.7]{AST}.
It is generated by the $\binom{n}{2}$ bilinear polynomials
${\rm det}(B^{jk})$ plus $\binom{n}{3}$ further trilinear polynomials.
See \cite{Li} for the natural generalization  
of this variety to higher dimensions.

The analysis in \cite{AST} was restricted to a single world point $X \in \PP^3$.
In this paper we study the case of two world points $X,Y \in \PP^3$
that are linked by a distance constraint.
 Consider the hypersurface $V(Q)$ in $\PP^3 \times \PP^3$  defined by
 \begin{equation}
 \label{eq:biquadratic} \quad
Q \,= \,(X_0Y_3 - Y_0X_3)^2 + (X_1Y_3 - Y_1X_3)^2 + (X_2Y_3 - Y_2X_3)^2 - X_3^2Y_3^2.
\end{equation}
The affine variety $V_{\RR}(Q) \cap \{X_3 {=} Y_3 {=}1\}$
in $\RR^3 \times \RR^3$ consists of
pairs of points whose Euclidean distance is $1$.
The \emph{rigid multiview map} is the rational map
\begin{equation}
\label{eq:rigidMap}
\begin{matrix}
\psi_{A}: & V(Q) & \hookrightarrow & \PP^3 \times \PP^3 & \dashrightarrow & 
(\PP^2)^n \times (\PP^2)^n, \\
& (X,Y) & & & \mapsto & \bigl((A_1X, \ldots A_nX), (A_1Y, \ldots A_nY)\bigr)  .
\end{matrix}
\end{equation}
The \emph{rigid multiview variety} is the image of this map.  This is a $5$-dimensional subvariety of $(\PP^2)^{2n}$. Its
multihomogeneous prime ideal $J_A$ lives in the polynomial ring $ \RR[u,v] = \RR[u_{i0},u_{i1},u_{i2},\,
v_{i0},v_{i1}, v_{i2}: \, i=1, \ldots, n]$, where $(u_{i0}{:}u_{i1}{:}u_{i2})$ 
and $(v_{i0}{:}v_{i1}{:}v_{i2})$ are coordinates for the $i$th factor
 $\PP^2$ on the left respectively right in 
$(\PP^2)^n \times (\PP^2)^n$.
Our aim is to determine the ideal $J_A$. 
Knowing generators of $J_A$ has the potential
of being useful for designing optimization tools
as in \cite{AAT} for triangulation in the presence of distance constraints.

The choice of world and image coordinates 
for the camera configuration $A = (A_1,\dots,A_n)$ gives our
problem the following group symmetries.
Let $N$ be an element of the \emph{Euclidean group of motions} $\text{SE}(3,\RR) $, which is generated by rotations and translations.
We may multiply the camera configuration on the right by $N$ to obtain 
$AN = (A_{1}N,\dots,A_{n}N)$.  Then $J_A = J_{AN}$
since $V(Q)$ is invariant under $\text{SE}(3,\RR)$.
For $M_{1}, \ldots, M_{n} \in \text{GL}(3,\RR)$, we may multiply $A$ on the left to obtain
$A' = (M_{1}A, \ldots, M_{n}A)$.  Then $J_{A'} = (M_1 \otimes \ldots \otimes M_n) J_A$.

This paper is organized as follows.  In Section 2 we present the explicit computation of the rigid multiview ideal for
$n=2,3,4$. Our main result, to be stated and proved in Section 3, is a system of equations that cuts out the rigid
multiview variety $V(J_A)$ for any $n$.  Section 4 is devoted to generalizations.  The general idea is to replace $V(Q)$
by arbitrary subvarieties of $(\PP^3)^m$ that represent polynomial constraints on $m \geq 2 $ world points.  We focus on
scenarios that are of interest in applications to computer vision.

Our results in Propositions \ref{prop:n2}, \ref{prop:n3}, \ref{prop:n4} and Corollary \ref{cor:mingensH_A} are proved by computations with \macaulay \cite{M2}; for details see Appendix \ref{compu}.
Following standard practice in computational algebraic geometry, we carry out the computation on many samples in a Zariski dense set of parameters, and then conclude that it holds generically.

\section{Two, Three and Four Cameras}\label{sec:234}

In this section we offer a detailed case study of the rigid multiview variety 
when the number $n$ of cameras is small.
We begin with the case~$n=2$.  The prime ideal $J_A$ lives in 
the polynomial ring $\RR[u,v]$ in $12$ variables. This is
the homogeneous coordinate ring of $(\PP^2)^4$, so it is naturally $\ZZ^4$-graded.
The variables $u_{10},u_{11},u_{12}$ have degree $(1,0,0,0)$,
the variables $u_{20},u_{21},u_{22}$ have degree $(0,1,0,0)$,
the variables $v_{10},v_{11},v_{12}$ have degree $(0,0,1,0)$, and
the variables $v_{20},v_{21},v_{22}$ have degree $(0,0,0,1)$.
Our ideal $J_A$ is $\ZZ^4$-homogeneous.

Throughout this section we shall assume that the camera configuration $A$ 
is \emph{generic} in the sense of algebraic geometry. This means that $A$
lies in the complement of a certain (unknown) proper algebraic subvariety
in the affine space of all $n$-tuples of  $3 \times 4$-matrices.
All our results in Section 2 were obtained by symbolic computations with
sufficiently many random choices of $A$ (see Appendix \ref{compu} for details).
Such choices of camera matrices are generic. They will be attained with with probability~$1$.

\begin{prop} \label{prop:n2}
  For $n = 2$, the rigid multiview ideal $J_A$ is minimally generated
  by eleven $\,\ZZ^4$-homogeneous polynomials in twelve variables, one
  of degree $(1,1,0,0)$, one of degree $(0,0,1,1)$, and nine of degree
  $(2,2,2,2)$.
\end{prop}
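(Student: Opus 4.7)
This proposition asserts a structural fact about a specific ideal in a multigraded polynomial ring, and the approach indicated in the paragraph preceding this section is explicit symbolic computation. My plan is as follows.

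First, I would choose a random camera pair $A = (A_1, A_2)$ with $3 \times 4$-matrix entries sampled from a large set of rationals, and construct in \macaulay the ring homomorphism
\[
\varphi_A: \RR[u,v] \longrightarrow \RR[X_0,\ldots,X_3,Y_0,\ldots,Y_3]/(Q),\qquad u_{ji} \mapsto (A_jX)_i,\ v_{ji} \mapsto (A_jY)_i.
\]
By definition $J_A = \ker(\varphi_A)$, which is computed by forming the graph ideal $(u_{ji} - (A_jX)_i,\, v_{ji} - (A_jY)_i,\, Q)$ in $\RR[u,v,X,Y]$ and eliminating the world-point variables $X_0,\ldots,Y_3$.

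Next, I would extract a minimal generating set via \texttt{mingens}. Since $\RR[u,v]$ is $\ZZ^4$-graded and $J_A$ is $\ZZ^4$-homogeneous, a minimal generating set is well-defined multidegree-by-multidegree, and its multidegree profile is an invariant of $J_A$. Structurally, I expect two generators to be the pointwise multiview bilinear relations applied to $X$ and to $Y$ separately: the determinant $\det(B^{12})$ from \eqref{eq:triangulation} written in the $u$-variables contributes multidegree $(1,1,0,0)$, and the analogous determinant in the $v$-variables contributes multidegree $(0,0,1,1)$. The remaining nine generators should arise from pulling back the biquadratic distance constraint $Q$ through a Cramer-style parametrization $X = \widetilde{\wedge}_5 B^{12}_i$ of bidegree $(1,1)$ in $u$ and the analogous parametrization of $Y$ of bidegree $(1,1)$ in $v$: substituting these into $Q$ yields relations of multidegree $(2,2,2,2)$, and after reducing modulo the two bilinear relations, the space of new generators should have rank nine.

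The main obstacle is justifying that the output of a single \macaulay run for one random $A$ represents the generic answer. Here I would appeal to the standard constructibility argument: the multidegree profile of a minimal generating set of $J_A$ is a constructible function of the entries of $A$, so the locus where it attains its generic value is a nonempty Zariski-open subset of the space of $n$-tuples of $3\times 4$-matrices, and a random rational sample lands in this set with probability one. To corroborate the conclusion and guard against arithmetic coincidences, I would rerun the computation on several independent samples and verify that the same eleven-generator multidegree profile emerges each time, as described in Appendix \ref{compu}.
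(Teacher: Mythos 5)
Your overall strategy---compute $J_A$ by elimination for random rational cameras, extract a minimal generating set with \texttt{mingens}, and pass from finitely many samples to the generic statement by a constructibility/semicontinuity argument---is exactly what the paper does (its ``proof'' is the computation in Listing~\ref{algo1} of Appendix~\ref{compu}, repeated on many random inputs). However, your specific elimination setup contains a genuine error: the kernel of the ring map $u_{ji}\mapsto (A_jX)_i$, $v_{ji}\mapsto(A_jY)_i$ is \emph{not} $J_A$. That kernel is the ideal of the Zariski closure of the image of the affine map from the affine cone over $V(Q)$ in $\AA^4\times\AA^4$ to $\AA^6\times\AA^6$, and in that affine image the scalings of $u_1$ and $u_2$ are linked, since both are computed from the same vector $X$. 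Already for one world point and $n=2$ this goes wrong: the image of $X\mapsto(A_1X,A_2X)$ is a $4$-dimensional linear subspace of $\AA^6$, so the elimination ideal contains two linear forms mixing $u_1$- and $u_2$-coordinates (and is not even $\ZZ^4$-homogeneous); it is not the principal ideal generated by the bilinear form $u_1^{\top}Fu_2$. Your computation as described would therefore return the wrong ideal and the wrong multidegree profile.

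The fix is to impose the projective condition $u_j\sim A_jX$ in each factor separately rather than the affine equality $u_j=A_jX$: either add the $2\times 2$ minors of the $3\times 2$ matrices $[\,A_jX \mid u_j\,]$ and $[\,A_jY \mid v_j\,]$ (this is what the paper does, after dehomogenizing the world points by $X_3=Y_3=1$, which also sidesteps extraneous components such as $X$ equal to a focal point), or introduce independent scalars $\lambda_j,\mu_j$ with $u_j=\lambda_jA_jX$, $v_j=\mu_jA_jY$ and eliminate these along with the world coordinates. With that correction, the remainder of your argument---the identification of the two determinantal generators of degrees $(1,1,0,0)$ and $(0,0,1,1)$, the nine octics obtained by substituting the Cramer parametrizations into $Q$, and the genericity argument via repeated random sampling---matches the paper's.
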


We prove this result by sufficiently many random computations with \macaulay.
A slightly simplified version of the code is shown in Listing \ref{algo1} in Appendix \ref{compu}.

Let us look at the result in more detail.
The first two bilinear generators are the familiar $6 \times 6$-determinants
\begin{equation}
\label{eq:twobilinear}
{\rm det} \begin{bmatrix} A_1 & u_1 & 0 \\ A_2 & 0 & u_2 \end{bmatrix} 
\qquad \hbox{and} \qquad
{\rm det} \begin{bmatrix} A_1 & v_1 & 0 \\ A_2 & 0 & v_2 \end{bmatrix} .
\end{equation}
These cut out two copies of the multiview threefold $V_A \subset (\PP^2)^2$, in separate variables, for $X \mapsto u=
(u_1,u_2)$ and $Y \mapsto v= (v_1,v_2)$.  If we write the two bilinear forms in (\ref{eq:twobilinear}) as $u_1^\top F u_2$
and $v_1^\top F v_2$ then $F$ is a real $3 \times 3$-matrix of rank~$2$, known as the {\em fundamental matrix} \cite[\S
9]{HZ} of the camera pair $(A_1,A_2)$.

The rigid multiview variety $V(J_A)$ is a divisor in 
$V_A \times V_A \subset (\PP^2)^2 \times (\PP^2)^2$.  The nine
octics that cut out this divisor can be understood as follows.  We 
write $B$ and $C$ for
the  $6 \times 6$-matrices in
(\ref{eq:twobilinear}), and $B_i$ and $C_i$ for the matrices obtained 
by deleting their $i$th rows. The kernels of these
$5 \times 6$-matrices are represented, via Cramer's Rule, 
by $\wedge_5 B_i$ and $\wedge_5 C_i$.  We write
$\widetilde{\wedge}_5 B_i$ and $\widetilde{\wedge}_5 C_i$ for the vectors 
given by their first four entries. As in
(\ref{eq:X}), these represent the two world points $X$
and $Y$ in $\PP^3$.  Their coordinates are bilinear forms in
$(u_1,u_2)$ or $(v_1,v_2)$, where each coefficient is a $3 \times 3$-minor of $\begin{small} \begin{bmatrix} A_1 \\
    A_2 \end{bmatrix} \end{small}$.  For instance, writing $a^{jk}_i$ for the $(j,k)$ entry of $A_i$, the first
coordinate of $\widetilde{\wedge}_5 B_1$~is
\[
\begin{small}
\begin{matrix}
-(a_1^{32} a_2^{23} a_2^{34}-a_1^{32} a_2^{24} a_2^{33}-a_1^{33} a_2^{22}
a_2^{34} + a_1^{33} a_2^{24} a_2^{32}+a_1^{34} a_2^{22} a_2^{33}-
a_1^{34} a_2^{23} a_2^{32})\, u_{11} u_{20} \\ 
+ (a_1^{32} a_2^{13} a_2^{34}-a_1^{32} a_2^{14} a_2^{33}-a_1^{33} a_2^{12}
a_2^{34}+a_1^{33} a_2^{14} a_2^{32}+a_1^{34} a_2^{12} a_2^{33}
- a_1^{34} a_2^{13} a_2^{32}) \,u_{11} u_{21} \\
- (a_1^{32} a_2^{13} a_2^{24}-a_1^{32} a_2^{14} a_2^{23}
-a_1^{33} a_2^{12} a_2^{24}+a_1^{33} a_2^{14} a_2^{22}
+a_1^{34} a_2^{12} a_2^{23}-a_1^{34} a_2^{13} a_2^{22}) \,u_{11} u_{22} \\
+ (a_1^{22} a_2^{23} a_2^{34} - a_1^{22} a_2^{24} a_2^{33}
-a_1^{23} a_2^{22} a_2^{34} + a_1^{23} a_2^{24} a_2^{32} + a_1^{24} a_2^{22}
 a_2^{33}- a_1^{24} a_2^{23} a_2^{32}) \,u_{12} u_{20} \\
 -( a_1^{22} a_2^{13} a_2^{34} - a_1^{22} a_2^{14} a_2^{33}-a_1^{23}
 a_2^{12} a_2^{34} + a_1^{23} a_2^{14} a_2^{32} + a_1^{24} a_2^{12} a_2^{33}
- a_1^{24} a_2^{13} a_2^{32})\, u_{12} u_{21} \\ \,
+(a_1^{22} a_2^{13} a_2^{24}
-a_1^{22} a_2^{14} a_2^{23} -a_1^{23} a_2^{12} a_2^{24}
+a_1^{23} a_2^{14} a_2^{22}+a_1^{24} a_2^{12} a_2^{23}
-a_1^{24} a_2^{13} a_2^{22})\, u_{12} u_{22}.
\end{matrix}
\end{small}
\]

Recall that the two world points in $\PP^3$ are linked by a distance
constraint~(\ref{eq:biquadratic}), expressed as a biquadratic
polynomial $Q$. We set $Q(X,Y)=T(X,X,Y,Y)$, where $T( \bullet
,\bullet,\bullet,\bullet)$ is a quadrilinear form.  We regard $T$ as a
tensor of order~$4$.  It lives in the subspace $\, \Sym_2 ( \RR^4)
\otimes \Sym_2 ( \RR^4) \simeq \RR^{100}\,$ of $\,(\RR^4)^{\otimes 4}
\simeq \RR^{256}$.  Here $\Sym_k(\,\cdot\,)$ denotes the space of
symmetric tensors of order $k$.

We now substitute our Cramer's Rule formulas for $X$ and $Y$
into the quadrilinear form $T$.  For any choice of indices
$1 {\leq} i {\leq} j {\leq} 6$ and $1 {\leq} k {\leq} l {\leq} 6$, 
  \begin{equation}
  \label{eq:zweizweizweizwei}
  T\bigl(\,\widetilde{\wedge}_5 B_i\,,\,
    \widetilde{\wedge}_5 B_j\,,\,
    \widetilde{\wedge}_5 C_k \,,\,
    \widetilde{\wedge}_5 C_l\,\bigr) 
    \end{equation}
     is a multihomogeneous 
    polynomial in $(u_1,u_2,v_1,v_2)$ of
    degree $(2,2,2,2)$.
    This polynomial lies in $J_A$
    but not in the ideal  $I_A(u) + I_A(v)$
    of $V_A \times V_A$, so it can 
    serve as one of the nine minimal generators
    described in Proposition~\ref{prop:n2}.

The number of distinct polynomials appearing in (\ref{eq:zweizweizweizwei})
equals $\binom{7}{2}^{2}=441$. 
A computation verifies that these polynomials span a real vector space 
of dimension $126$. The image of that vector space modulo
the degree $(2,2,2,2)$ component of the ideal $I_A(u)+I_A(v)$
has dimension $9$.

We record three more features of the rigid multiview
with $n=2$ cameras. The first is the {\em multidegree} \cite[\S 8.5]{MS},
or, equivalently, the
cohomology class of $\,V(J_A)\,$ in
$\,H^*\bigl((\PP^2)^4 ,\ZZ\bigr)  = 
\ZZ[u_1,u_2,v_1,v_2]/\langle u_1^3,u_2^3,v_1^3,v_2^3\rangle$.
It equals
\[ \begin{matrix} 
\,\, 2u_1^2v_1
+2u_1u_2v_1
+2u_2^2v_1
+2u_1^2v_2
+2u_1u_2v_2
+2u_2^2v_2 \\
+2u_1v_1^2
+2u_1v_1v_2
+2u_1v_2^2
+2u_2v_1^2
+2u_2v_1v_2
+2u_2v_2^2.
\end{matrix}
\]
This is found with the built-in command {\tt multidegree} in \macaulay.

The second is the table of the Betti numbers of the minimal free resolution of~$J_A$ in the format of \macaulay \cite{M2}. In that format, the columns
correspond to the syzygy modules, while rows denote the degrees.  
For $n=2$ we obtain
\begin{small}
\begin{verbatim}
                           0  1  2  3 4 5
                    total: 1 11 25 22 8 1
                        0: 1  .  .  . . .
                        1: .  2  .  . . .
                        2: .  .  1  . . .
                        7: .  9 24 22 8 1
\end{verbatim}
\end{small}
The column labeled {\tt 1} lists the minimal generators from Proposition~\ref{prop:n2}.
Since the codimension of $V(J_A)$ is $3$, the table shows that $J_A$ is not Cohen-Macaulay. The unique $5$th syzygy has
degree $(3,3,3,3) $ in the $\ZZ^4$-grading.

The third point is an explicit choice for the nine generators of degree $(2,2,2,2) $ in Proposition \ref{prop:n2}.
Namely, we take $i=j \leq 3$ and $k=l \leq 3$ in (\ref{eq:zweizweizweizwei}).
The following corollary is also found by computation:  
  
\begin{cor}\label{cor:mingensH_A}
The rigid multiview ideal $J_A$ for $n=2$ is 
generated by $\,I_A(u) + I_A(v)$ together with the nine polynomials
  $Q\bigl(\widetilde{\wedge}_5 B_{i},\widetilde{\wedge}_5 C_{k}\bigr )$ for $\,1\leq i,k\leq 3$.
\end{cor}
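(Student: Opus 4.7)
The plan is to verify two claims: (i) each polynomial $Q(\widetilde{\wedge}_5 B_i,\widetilde{\wedge}_5 C_k)$ lies in $J_A$, and (ii) the nine polynomials obtained for $1 \le i,k \le 3$, together with the bilinear generators of $I_A(u)+I_A(v)$, give a full generating set.

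Claim (i) is essentially immediate from Cramer's rule, equation (\ref{eq:X}): for any $(X,Y) \in V(Q)$ with image $(u,v) = \psi_A(X,Y)$, the vector $\widetilde{\wedge}_5 B_i(u)$ represents $X$ and $\widetilde{\wedge}_5 C_k(v)$ represents $Y$ as projective coordinates in $\PP^3$, so substituting them into the biquadratic form $Q$ produces an expression that reduces to $Q(X,Y)=0$ up to scaling. Passing to the Zariski closure places $Q(\widetilde{\wedge}_5 B_i,\widetilde{\wedge}_5 C_k)$ in $J_A$.

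For claim (ii), invoke Proposition~\ref{prop:n2}: the ideal $J_A$ has a minimal generating set of size eleven, consisting of the two bilinear forms of (\ref{eq:twobilinear}), which are precisely a basis of $I_A(u)+I_A(v)$ in the relevant bidegrees, together with nine polynomials of multidegree $(2,2,2,2)$. The proposed candidate family consists of exactly nine polynomials of multidegree $(2,2,2,2)$ that lie in $J_A$, so it suffices to show that their images are linearly independent in the quotient $[J_A]_{(2,2,2,2)}/[I_A(u)+I_A(v)]_{(2,2,2,2)}$, which is known to have dimension nine by the count recorded immediately after (\ref{eq:zweizweizweizwei}). Independence then forces these images to form a basis, and the corollary follows at once from Proposition~\ref{prop:n2}.

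The main obstacle is this linear independence step. A priori, the nine candidates form only a small subfamily of the $441$ $T$-values already shown to span the nine-dimensional quotient, and there is no evident structural reason why the diagonal sub-choice $i=j \le 3$, $k=l \le 3$ should be nondegenerate rather than, say, the sub-choice with $i=j \le 3$ but $k=l \in \{4,5,6\}$. I would verify nondegeneracy by computation in \macaulay, reducing each of the nine octics modulo $I_A(u)+I_A(v)$ in the fixed multigraded component and checking that the resulting coefficient matrix has rank nine, for sufficiently many random generic configurations $A$, in accordance with the computational methodology established in Section~\ref{sec:234}.
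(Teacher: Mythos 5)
Your proposal is correct and takes essentially the same route as the paper: Corollary~\ref{cor:mingensH_A} is proved there purely by \macaulay computation on sufficiently many random generic cameras (see Appendix~\ref{compu}), and your argument likewise bottoms out in a computational nondegeneracy check. Your Nakayama-style reduction via Proposition~\ref{prop:n2} --- that it suffices to verify linear independence of the nine octics in the nine-dimensional quotient $[J_A]_{(2,2,2,2)}/[I_A(u)+I_A(v)]_{(2,2,2,2)}$ --- is sound and merely makes explicit what the paper's direct ideal computation verifies implicitly.
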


We next come to the case of three cameras:

\begin{prop}
\label{prop:n3}
For $n = 3$, the rigid multiview ideal $J_A$ is minimally generated by $\,177$ polynomials in $18$ variables.
Its Betti table is given in Table~\ref{tab:n3}.
\end{prop}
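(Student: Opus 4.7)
The plan is to verify Proposition~\ref{prop:n3} by a direct symbolic computation with \macaulay, following exactly the methodology used for Proposition~\ref{prop:n2} and explicitly flagged in the Introduction. There is no a priori structural argument being offered; the statement is a census of minimal generators and syzygies, so it calls for a Gröbner basis computation on a generic specialization of $A$.

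First I would fix three $3 \times 4$-matrices $A_1, A_2, A_3$ with random rational entries (or, more realistically for performance, entries in a large finite field $\FF_p$). In the polynomial ring $R = \kk[X_0,\ldots,X_3, Y_0,\ldots,Y_3, u_{ij}, v_{ij}]$ with its natural $\ZZ^8$-multigrading on the target, I would construct the ideal $K$ defining the graph of the rational map $\psi_A$ on $V(Q)$: take the biquadratic $Q(X,Y)$ from (\ref{eq:biquadratic}), together with the $2 \times 2$-minors of $[A_i X \mid u_i]$ and of $[A_i Y \mid v_i]$ for $i=1,2,3$, which enforce $u_i \parallel A_i X$ and $v_i \parallel A_i Y$. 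After saturating $K$ with respect to the irrelevant ideals of the two $\PP^3$ factors, elimination of $X$ and $Y$ produces $J_A$. Once $J_A$ is available, a call to \texttt{res} yields a minimal free resolution, and \texttt{betti} prints the diagram displayed in Table~\ref{tab:n3}; the number $177$ is then read off as the total in the column of first syzygies. As in Section~\ref{sec:234}, I would run this for several independent random choices of $A$ and check that the Betti diagrams coincide, which, together with upper-semicontinuity of graded Betti numbers in flat families, pins down the generic values.

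The hard part is entirely computational scale. For $n=3$, eliminating eight $X,Y$-variables from a multigraded ideal in a $26$-variable ring and then resolving the $18$-variable output through all syzygy modules is already expensive; naive execution in characteristic zero is likely to exhaust memory. The practical remedies are: work modulo a large prime $p$, exploit the full $\ZZ^6$-multigrading when invoking \texttt{res} so that syzygies are pruned degree by degree, and replace the elimination step by the alternative construction of $J_A$ as $\bigl(I_A(u) + I_A(v) + \langle Q(\widetilde{\wedge}_5 B_i^{jk}, \widetilde{\wedge}_5 C_{k'}^{j'l'}) \rangle\bigr)$ for sufficiently many index choices, in the spirit of Corollary~\ref{cor:mingensH_A}, since the ideals $I_A(u)$ and $I_A(v)$ for $n=3$ are already known from \cite[Corollary~2.7]{AST}. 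The secondary difficulty is the leap from a finite number of specializations to a generic statement, which is handled by semicontinuity combined with the observed consistency of Betti diagrams across independent random samples, exactly as described in the final paragraph of Section~\ref{sec:one}.
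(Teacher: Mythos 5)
Your proposal matches the paper's own approach: Proposition~\ref{prop:n3} is established purely by \macaulay computation, forming the graph ideal of $\psi_A$ from the $2\times2$-minors and the (dehomogenized) constraint $Q$, eliminating the world coordinates, resolving, and repeating over many random camera choices to conclude generically. The only differences are implementation details — the paper dehomogenizes the world points rather than saturating, and accelerates the elimination by sparsifying $A$ via the $\GL_3\times\GL_4$ group action rather than by passing to a finite field — but these do not change the route of the argument.
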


\begin{table}
\begin{small}
\begin{verbatim}
          0   1    2    3     4     5     6    7    8   9  10 11
   total: 1 177 1432 5128 10584 13951 12315 7410 3018 801 126  9
       0: 1   .    .    .     .     .     .    .    .   .   .  .
       1: .   6    .    .     .     .     .    .    .   .   .  .
       2: .   2   21    6     .     .     .    .    .   .   .  .
       3: .   .    6   36    18     .     .    .    .   .   .  .
       4: .   .    1   12    42    36     9    .    .   .   .  .
       5: .   1    .    .     .     .     .    .    .   .   .  .
       6: .  24  108  166   120    42     6    .    .   .   .  .
       7: . 144 1296 4908 10404 13873 12300 7410 3018 801 126  9
\end{verbatim}
\end{small}
\smallskip
\caption{\label{tab:n3} Betti numbers for the rigid multiview ideal with $n=3$.}
\end{table}

Proposition \ref{prop:n3} is proved by computation.
The $177$ generators occur in eight symmetry classes
of multidegrees. Their numbers in these classes are
\[
\begin{matrix}
 (110000) : 1 &&    (220111):3 && (220220):9 &&    (211211):1 \\ 
(111000) : 1 & &    (211111):1 && (220211):3 &&  (111111):1 
\end{matrix}
\]
For instance, there are nine generators in degree $(2,2,0,2,2,0)$, arising 
from Proposition~\ref{prop:n2} for the first two cameras. Using various pairs
among the three cameras when forming the matrices $B_i, B_j, C_k$ and $C_l$
in (\ref{eq:zweizweizweizwei}), we can construct the generators  
of degree classes $(2,2,0,2,1,1)$ and $(2,1,1,2,1,1)$.

Table \ref{tab:n3} shows the Betti table for $J_A$ in \macaulay format.
The first two entries ({\tt 6} and {\tt 2}) in the 
{\tt 1}-column refer to the eight minimal generators of $I_A(u) + I_A(v)$. 
These are six bilinear forms,
representing the three fundamental matrices, and two trilinear forms, representing the {\em trifocal tensor} of the
three cameras (cf.~\cite{AO}, \cite[\S 15]{HZ}).  The entry 
{\tt 1} in row {\tt 5} of column~{\tt 1} marks the unique sextic
generator of $J_A$, which has $\ZZ^6$-degree $(1,1,1,1,1,1)$.

For the case of four cameras we obtain the following result.

\begin{prop}
\label{prop:n4}
For $n = 4$, the rigid multiview ideal $J_A$ is minimally
generated by $1176$ polynomials in $24$ variables. 
All of them are induced from $n=3$.
Up to symmetry, the degrees of the generators in the $\ZZ^8$-grading are
\[
\begin{matrix}
 (11000000) : 1 &&    (22001110):3 && (22002200):9 &&    (21102110):1 \\ 
(11100000) : 1 & &    (21101110):1 && (22002110):3 &&  (11101110):1 
\end{matrix}
\]
\end{prop}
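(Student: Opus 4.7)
The plan is to verify the proposition by \macaulay\ computation on sufficiently random generic camera configurations $A = (A_1, A_2, A_3, A_4)$, in the same spirit as the proofs of Propositions~\ref{prop:n2} and~\ref{prop:n3}. The key organizing principle is the assertion in the statement that all minimal generators are inherited from the three-camera case, so the main task is to exhibit the pullbacks from triples of cameras and then check that nothing is missing.

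The first step is to produce the generators explicitly. For each of the $\binom{4}{3} = 4$ subsets $S \subset \{1,2,3,4\}$, the coordinate projection $(\PP^2)^8 \to (\PP^2)^6$ onto the factors indexed by $S$ sends $V(J_A)$ onto the rigid multiview variety $V(J_{A_S})$, so each of the $177$ minimal generators of $J_{A_S}$ supplied by Proposition~\ref{prop:n3} pulls back to an element of $J_A$ of multidegree zero in the two factors of the omitted camera. Collecting these pullbacks over all four subsets $S$ and classifying their multidegrees under the symmetry group $S_4 \times S_2$ that permutes cameras and swaps $u \leftrightarrow v$ produces the eight orbit classes listed in the statement. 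A direct combinatorial count over the orbits yields $1176$ distinct polynomials in total.

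The second step is to show that these pullbacks generate $J_A$ minimally. For a random generic $A$, I would compute $J_A$ from the parameterization $\psi_A$ by eliminating the world-point variables $X$ and $Y$ modulo the biquadratic $Q$, exactly as in Listing~\ref{algo1}, and then compare the resulting minimal free resolution against the $1176$ pullbacks. Equivalently, it suffices to verify that, for every multidegree $\mathbf{d}$ that could support a minimal generator, the pullbacks span $(J_A)_{\mathbf{d}}$ modulo the subideal they themselves generate in degree~$\mathbf{d}$.

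The principal obstacle is the sheer scale of the computation: the polynomial ring has $24$ variables and the expected generating set already contains over a thousand multihomogeneous polynomials. A direct elimination is likely to exceed memory or time budgets. To keep the computation feasible I would exploit the fine $\ZZ^8$-grading and proceed one multidegree at a time, restricting to multidegrees bounded componentwise by the largest multidegree appearing for $n = 3$, since any hypothetical new generator would have to sit inside such a bound. In each such multidegree the verification reduces to a rank computation on a finite-dimensional graded vector space, which \macaulay\ can perform reliably on a sufficiently random rational sample, establishing the statement generically in the same manner as the previous propositions.
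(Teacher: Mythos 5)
Your overall strategy---verify the statement by \macaulay\ elimination on sufficiently many random camera configurations and read off the minimal generators from the graded Betti numbers---is exactly what the paper does; the identification of the $1176$ pullbacks from camera triples and the orbit count are also correct and match Table~\ref{tab:gens}. One practical point you miss: the paper makes the direct elimination for $n=4$ feasible not by degree truncation but by the group actions from Section~\ref{sec:one}, replacing $A$ by $A'=(M_1A_1N,\dots,M_nA_nN)$ with $M_i\in\GL_3$, $N\in\GL_4$ chosen so that $A'$ is sparse (and adjusting $Q$ accordingly); with this the $n=4$ elimination runs in about two minutes, so no workaround is needed.

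Your proposed fallback, however, contains a genuine logical gap. You restrict the verification to multidegrees bounded componentwise by the largest multidegrees occurring for $n=3$, on the grounds that ``any hypothetical new generator would have to sit inside such a bound.'' That is not something you may assume: it is essentially equivalent to the claim being proved, namely that all minimal generators of $J_A$ for $n=4$ are induced from $n=3$. Without an independent a priori bound on the degrees of minimal generators of $J_A$ (e.g.\ from a regularity estimate, or from actually computing the full elimination ideal), checking only the bounded multidegrees cannot exclude a minimal generator in some larger multidegree, so the degree-by-degree rank computations would establish only a lower bound on the generating set, not minimality and completeness. The paper's route avoids this entirely because the elimination ideal of the graph is provably all of $J_A$, so its minimal free resolution certifies the full list of generators. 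If you want to keep a truncated computation for efficiency, you must first supply the missing degree bound.
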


We next give a brief explanation of how the rigid multiview ideals
$J_A$ were computed with \macaulay \cite{M2}. For the purpose of
efficiency, we introduce projective coordinates for the image points
and affine coordinates for the world points.  We work in the
corresponding polynomial ring
\[\QQ[u,v][X_0,X_1,X_2,Y_0,Y_1,Y_2].\]
The rigid multiview map $\psi_A$ is thus restricted to $\RR^3 \times
\RR^3$.
The prime ideal of its graph is generated by the following two classes
of polynomials:
\begin{enumerate}
\item the $2\times 2$ minors of the $3\times 2$ matrices 
\[\left [\begin{array}{c|c}
A_i\cdot (X_0,X_1,X_2,1)^\top&u_i\end{array}
\right ], \enspace \left [\begin{array}{c|c}
A_i\cdot (Y_0,Y_1,Y_2,1)^\top&v_i\end{array}
\right ],
\]
\item the dehomogenized distance constraint 
\[Q\bigl((X_0,X_1,X_2,1)^\top,(Y_0,Y_1,Y_2,1)^\top\bigr).\]
\end{enumerate}
From this ideal we eliminate the six
 world coordinates $\{X_0,X_1,X_2,Y_0,Y_1,Y_2\}$.

For a speed up, we exploit the group actions 
described in Section \ref{sec:one}.  We replace $A = (A_1, ..., A_n)$ and 
$Q = Q(X,Y)$ by $A' = (M_1A_1N, ..., M_nA_nN)$ and 
$Q' = Q(N^{-1}X, N^{-1}Y)$.  Here $M_i \in \GL_3(\RR)$ 
and $N \in \GL_4(\RR)$ are chosen so that $A'$ is sparse.
The modification to $Q$ is needed since we generally use $N \notin \text{SE}(3, \RR)$.  
The elimination above now computes the ideal
$(M_1 \otimes \ldots \otimes M_n) J_A$, and it terminates much faster.  For example,
for $n=4$, the computation took two minutes for sparse $A'$
and more than one hour for non-sparse $A$.  For $n=5$, \macaulay
ran out of memory after 18 hours of CPU time for non-sparse $A$. The complete code used in this paper can be accessed via \url{http://www3.math.tu-berlin.de/combi/dmg/data/rigidMulti/}.

One last question is whether the 
Gr\"obner basis property in \cite[\S 2]{AST}
extends to the rigid case.
This does not seem to be the case in general.
Only in Proposition~\ref{prop:n2}
can we choose minimal generators
 that form a Gr\"obner basis.

\begin{rem}\label{rem:octic} Let $n=2$. The reduced Gr\"obner basis of $J_A$
in the reverse lexicographic term order is a minimal generating set.
For a generic choice of cameras the initial ideal equals
\[
\begin{matrix}
{\rm in}(J_A) \, = \, \langle \,
 u_{10} u_{20},\,
 v_{10} v_{20},  \,
      u_{10}^2 u_{21}^2 v_{10}^2 v_{21}^2,\,
      u_{10}^2 u_{21}^2 v_{11}^2 v_{20} v_{21},\,
      u_{10}^2 u_{21}^2 v_{11}^2 v_{20}^2, \\ \phantom{da}
          u_{11}^2 u_{20}^2 v_{10}^2 v_{21}^2,\,\,
          u_{11}^2 u_{20} u_{21} v_{10}^2 v_{21}^2,      \,\,
          u_{11}^2 u_{20}^2 v_{11}^2 v_{20} v_{21},  \\ \phantom{dada} \quad
          u_{11}^2 u_{20}^2 v_{11}^2 v_{20}^2,\,
          u_{11}^2 u_{20} u_{21} v_{11}^2 v_{20} v_{21},\,
          u_{11}^2 u_{20} u_{21} v_{11}^2 v_{20}^2 \,\rangle.
\end{matrix}
\]
For special cameras the exact form of the initial ideal may change.
However, up to symmetry the degrees of the generators in the $\ZZ^4$-grading stay the same.
In general, a universal Gr\"obner basis for the rigid multiview ideal $J_A$ consists of  octics of degree $(2,2,2,2)$ plus the two quadrics  (\ref{eq:twobilinear}).
This was verified using the \gfan \cite{gfan} package in \macaulay.
Analogous statements do not hold for $n \geq 3$.
\end{rem}

\section{Equations for the Rigid Multiview Variety}

The computations presented in Section 2 suggest the following conjecture.

\begin{conj}
\label{conj:atmostdegree8} 
{\it The rigid multiview ideal $J_A$ is minimally generated
by $\,\frac{4}{9}n^6-\frac{2}{3}n^5+ \frac{1}{36}n^4
+\frac{1}{2}n^3+\frac{1}{36}n^2-\frac{1}{3}n\,$
polynomials.
These polynomials come from two triples of cameras,
and their  number per class of degrees~is}
\[
\begin{array}{rlrl}
 (110..000..) :& \,1 \cdot 2\binom{n}{2}  &   (220..111..):&\,  3 \cdot  2 \binom{n}{2}\binom{n}{3}\\   
(220..220..):&\, 9 \cdot \binom{n}{2}^2   &    (211..211..):&\, 1\cdot n^2 \binom{n-1}{2}^2 \smallskip \\   
(111..000..): &\, 1 \cdot 2 \binom{n}{3} &     (211..111..):&\,  1 {\cdot} 2n \binom{n-1}{2} \binom{n}{3}   \\
(220..211..):&\, 3 {\cdot}  2 n \binom{n}{2} \binom{n-1}{2} & (111..111..):&\, 1 \cdot \binom{n}{3}^2
\end{array}
\]
\end{conj}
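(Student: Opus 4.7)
The plan is to prove the conjecture by combining an explicit construction of candidate generators with a \emph{locality reduction} that pins down the multigraded Betti numbers class by class.

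The first step is to write down the candidates explicitly. For each pair $\{j,k\}\subset[n]$ form, as in~(\ref{eq:X}), the Cramer vectors $\widetilde\wedge_5 B^{jk}_i(u)\in(\RR[u])^4$ whose entries are bilinear in $u_j,u_k$ and which agree projectively with the reconstructed world point $X$ on the image of $\psi_A$; analogously define $\widetilde\wedge_5 C^{pq}_l(v)$. Let $T$ be the symmetric quadrilinear form with $T(X,X,Y,Y)=Q(X,Y)$. The candidate generators are then (a) the bilinear and trilinear generators of $I_A(u)+I_A(v)$ described in \cite[Corollary 2.7]{AST}, contributing the classes $(11\ldots)$ and $(111\ldots)$, and (b) polynomials of the form $T\bigl(\widetilde\wedge_5 B^{\sigma}_i,\widetilde\wedge_5 B^{\sigma'}_{i'},\widetilde\wedge_5 C^{\tau}_l,\widetilde\wedge_5 C^{\tau'}_{l'}\bigr)$, where $\sigma,\sigma'$ range over pairs or triples of indices on the $u$-side (and similarly $\tau,\tau'$ on the $v$-side), with representatives chosen in each multidegree class so that the counts ($9,3,3,1,1,1$) match Proposition~\ref{prop:n2} and its triple analogues. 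Ideal membership is then immediate: on the image $\widetilde\wedge_5 B^{\sigma}_i(u)$ is a nonzero scalar multiple of $X$, so each such polynomial equals a scalar times $T(X,X,Y,Y)=Q(X,Y)=0$.

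For the generation step (the intended content of the Section~3 main theorem), the idea is that $V(J_A)$ is a divisor in $V_A(u)\times V_A(v)$ cut out by the pull-back $Q(X(u),Y(v))$, where $X(u)$ and $Y(v)$ are the rational reconstruction maps regular away from the baselines. Since every pair $\{j,k\}$ provides a Cramer formula for $X$, the candidates in (b) realise this pull-back on the open locus of well-defined reconstruction. A saturation with respect to the multigraded irrelevant ideal of $(\PP^2)^{2n}$, combined with the fact that $I_A(u)+I_A(v)$ is already the prime ideal of $V_A(u)\times V_A(v)$, globalises this to the equality of ideals.

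The remaining and, to my mind, most delicate part is \emph{minimality}. The core claim to establish is a locality property: for every multidegree $\mathbf d$ listed in the conjecture, with support $S\subset[n]$, the graded piece $\bigl(J_A/\mm J_A\bigr)_\mathbf{d}$ is canonically isomorphic to the corresponding piece of $J_{A|_S}/\mm J_{A|_S}$, where $A|_S$ is the subsystem indexed by $S$ and $\mm$ denotes the multigraded irrelevant ideal. Because every $\mathbf d$ in the list has $|S|\leq 6$ (``two triples''), the locality claim reduces minimality to the finitely many cases $|S|\leq 6$: the bilinear and trilinear multidegrees are handled by the minimal resolution of $V_A$ from \cite{AST}; the class $(2,2,0{\ldots},2,2,0{\ldots})$ with $|S|=4$ is Proposition~\ref{prop:n2}; and the five classes with $|S|\in\{5,6\}$ should be within reach of a single \macaulay computation using the group-symmetry reductions already exploited in Section~\ref{sec:234}.

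The hard part will be the locality isomorphism itself: one must rule out syzygies that cancel minimal generators of $J_{A|_S}$ once extra cameras are adjoined. A natural strategy is to exhibit a retraction of the multigraded polynomial ring $\RR[u,v]\to\RR[u,v]_{|S}$, obtained by specialising the variables outside $S$ to generic constants, and to show that it induces an injection of the degree-$\mathbf d$ piece of the minimal free resolution of $J_A$ into that of $J_{A|_S}$. Verifying that this specialisation is compatible with minimality--that no new syzygies with support outside $S$ affect the $\mathbf d$-strand--is the main technical obstacle, compounded by the practical difficulty, noted by the authors, that \macaulay already exhausts memory at $n=5$.
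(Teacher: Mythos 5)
First, a point of orientation: the statement you are proving is Conjecture~\ref{conj:atmostdegree8}, which the paper does \emph{not} prove. The authors state explicitly that they have only a computational verification up to $n=5$, and their main theorem (Theorem~\ref{thm:six}) establishes only the much weaker \emph{set-theoretic} statement that the octics of class $(220..220..)$ cut out $V(J_A)$ inside $V_A\times V_A$. Your proposal is therefore not being measured against a proof in the paper; it is a research plan for an open problem, and as written it has concrete gaps that prevent it from being a proof.

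The most tangible gap is in your candidate construction (b). A polynomial $T\bigl(W_1,W_2,W_3,W_4\bigr)$ with $W_1,W_2$ reconstruction vectors for $X$ and $W_3,W_4$ reconstruction vectors for $Y$ has $u$-degree $\deg W_1+\deg W_2$ and $v$-degree $\deg W_3+\deg W_4$, and every reconstruction vector (from a pair or a triple of cameras) has total degree at least $2$. Hence every such polynomial has total degree at least $8$, and its $v$-degree is a sum of two degrees each of total degree $\geq 2$. This produces the classes $(220..220..)$, $(220..211..)$ and $(211..211..)$, but it cannot produce $(111..000..)$, $(220..111..)$, $(211..111..)$ or $(111..111..)$: a $v$-part such as $(1,1,1,0,\ldots)$ does not decompose as $\deg W_3+\deg W_4$. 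The unique sextic generator of degree $(1,1,1,1,1,1)$ for $n=3$ (Proposition~\ref{prop:n3}) is a genuinely different object, found by elimination, for which neither you nor the paper has a closed-form construction. Your list of candidates therefore does not even contain representatives of three of the eight degree classes. Beyond that, the two load-bearing steps are asserted rather than proved: (i) the ``generation step'' conflates cutting out the variety (radical equality, which is what saturation arguments and Theorem~\ref{thm:six} can give) with ideal-theoretic generation of the prime ideal $J_A$, which is what minimal generation requires; and (ii) the ``locality isomorphism'' for the strands $\bigl(J_A/\mm J_A\bigr)_{\mathbf d}$ is precisely the open content of the conjecture --- it is not automatic that a minimal generator of $J_{A|_S}$ remains minimal (or that no new generators of that multidegree appear) when further cameras are adjoined, and you give no mechanism to exclude this. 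Finally, even granting locality, the base cases include supports of size $6$ (classes $(211..211..)$ and $(111..111..)$), i.e.\ $n=6$ computations, whereas the paper reports that \macaulay already exhausts memory at $n=5$ without the sparsity tricks; so the finite verification you defer to the computer has not been, and may not easily be, carried out.
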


\setcounter{MaxMatrixCols}{12}
\begin{table}
\begin{center}
\begin{tabular}[H]{r@{\hspace{2em}}rr@{\hspace{1.5em}}rrr@{\hspace{2em}}rrrr@{\hspsace{\textcolor{red}{4em}}}rrr}
\toprule
$n\backslash$degree & 2 & 3 & 6 & 7 & 8 & total & timing (s) \\
\midrule
2 &  2 & \phantom{0} & \phantom{0} & \phantom{0} & 9 &  1 &  $<1$ \\
3 &  6 & 2 &  1 & 24 & 144 & 177 &  14\\
4 & 12 & 8 &  16 & 240 & 900 &  1176 & 130 \\
5 & 20 & 20 &  100 & 1200 & 3600 & 4940 & 24064 \\
\bottomrule
\end{tabular}
\end{center}
\caption{\label{tab:gens} 
The known minimal generators of the rigid multiview ideals, listed by total degree, for up to five cameras.
There are no minimal generators of degrees $4$ or $5$.  
Average timings (in seconds), using the speed up described above,
are in the last column.}
\end{table}

At the moment we have a computational proof only up to $n = 5$.
Table~\ref{tab:gens} offers a summary of the corresponding numbers of generators.

Conjecture \ref{conj:atmostdegree8}  implies that $V(J_A)$ is set-theoretically
defined by the equations coming from  triples
of cameras. It turns out that,
for the set-theoretic description,  pairs of cameras suffice.
The following  is our main result:

\begin{thm} \label{thm:six} Suppose that the $n$ focal points of $A$ are in general position
in $\PP^3$.
The rigid multiview variety $V(J_A)$ is cut out as a subset of $V_A \times V_A $ by the $9
  \binom{n}{2}^2 $ octic generators of degree class $(220..220..)$. In other words, equations coming from any two pairs of
  cameras suffice set-theoretically.
\end{thm}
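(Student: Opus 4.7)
The containment $V(J_A) \subset W$, where $W$ denotes the common vanishing locus in $V_A \times V_A$ of the octic generators of degree class $(220..220..)$, is immediate from Section~\ref{sec:234} since these octics lie in $J_A$ and $J_A \supset I_A(u) + I_A(v)$. For the reverse inclusion my plan is, given $(u, v) \in W$, to produce $(X, Y) \in V(Q)$ with $\psi_A(X, Y) = (u, v)$, thereby placing $(u, v) \in V(J_A)$.

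The key device is the pair reconstruction of (\ref{eq:X}): for camera indices $(j, k)$ set $X^{jk}(u) := \widetilde{\wedge}_5 B_i^{jk}(u) \in \CC^4$, and call $(j, k)$ a \emph{good pair for $u$} when $B^{jk}(u)$ has rank $5$, in which case $X^{jk}(u) \neq 0$ represents the intersection in $\PP^3$ of the back-projected lines of $u_j$ and $u_k$. The central ingredient I need is a consistency lemma: for $u \in V_A$ and $(j, k)$ good, $A_i \, X^{jk}(u)$ is proportional to $u_i$ for \emph{every} $i \in \{1, \ldots, n\}$, not just for $i \in \{j, k\}$. I expect this to follow by unpacking the bilinear and trilinear generators of $I_A$ from \cite[Cor.~2.7]{AST}, which were introduced precisely to enforce cross-view compatibility of a single world point $X \in \PP^3$ with all $n$ image planes.

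Next I verify that good pairs exist. The pair $(j, k)$ fails for $u$ precisely when $(u_j, u_k) = (e_{j \leftarrow k}, e_{k \leftarrow j})$, i.e.\ when the would-be preimage lies on the baseline $\beta_{jk}$; the no-three-collinear part of general position forces the epipoles $\{e_{j \leftarrow k}\}_{k \neq j}$ to be pairwise distinct in the $j$th image plane, so a short pigeonhole argument shows that for $n \geq 3$ no $u \in V_A$ has every pair bad, while for $n = 2$ the only obstruction is the single point $(e_{12}, e_{21}) \in V_A$. For $(u, v) \in W$ outside this exceptional locus $E$, I pick good pairs $(j, k)$ for $u$ and $(l, m)$ for $v$ and set $X := X^{jk}(u)$, $Y := Y^{lm}(v)$. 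The octic generator associated to this pair-of-pairs equals, up to a nonzero scalar, $Q(X^{jk}, Y^{lm})$; its vanishing at $(u, v) \in W$ yields $Q(X, Y) = 0$, while the consistency lemma delivers $\psi_A(X, Y) = (u, v)$. Hence $(u, v) \in V(J_A)$.

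The residual case (only arising for $n = 2$ at $(u, v) \in E$) I handle directly: at $u = (e_{12}, e_{21})$ the fiber $\phi_A^{-1}(u) = \beta_{12}$ is a projective line, so the biquadratic $Q(X, Y) = 0$ restricted to $X \in \beta_{12}$ is a quadratic in one variable, which has a solution over $\CC$ for any $Y$ determined by $v$; the symmetric case and the diagonal $u = v = (e_{12}, e_{21})$ are analogous. The principal obstacle I anticipate is the consistency lemma of the second paragraph---extending a two-camera Cramer reconstruction to compatibility with all $n$ image equations on $V_A$---which I expect to reduce to repeated use of the trifocal trilinear relations in the ideal $I_A$ of \cite{AST}.
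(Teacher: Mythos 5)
Your proposal follows essentially the same route as the paper's proof: your ``good pair'' dichotomy is the content of Lemma~\ref{l:nonTriangOn} (all points of $V_A$ are triangulable except the epipole pair when $n=2$), the identification of the degree-$(220..220..)$ octics with $Q$ evaluated at the two Cramer reconstructions is exactly how the paper deduces $Q(X,Y)=0$ from membership in $V(H_A)$, and your residual $n=2$ case is the second claim of that lemma. The one step you defer---the ``consistency lemma'' that the pairwise-triangulated $X$ is compatible with \emph{all} $n$ views---is precisely what the paper compresses into ``$u$ has a unique preimage $X$,'' so you have isolated the right crux; the only small repair I would suggest is in your residual case, where you should argue by density and closure (as the paper does, intersecting $\beta_{12}$ with $X^Q$ for \emph{generic} $X$) rather than pointwise, since for special $v$ both roots of the binary quadratic on $\beta_{12}$ could land on focal points, where $\psi_A$ is undefined.
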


With notation as in the introduction, 
the relevant octic polynomials are
\[T \bigl(\,\widetilde{\wedge}_5 B_{i_1}^{j_1k_1}\,,\,\widetilde{\wedge}_5
  B_{i_2}^{j_1k_1},\widetilde{\wedge}_5 C_{i_3}^{j_2k_2} \, ,\,
  \widetilde{\wedge}_5 C_{i_4}^{j_2k_2} \,\bigr) , \]
  for all possible choices of indices.
  Let $H_A$ denote the ideal generated by these polynomials
  in $\RR[u,v]$, the polynomial ring in $6n$ variables.
  As before, we write $I_A(u) + I_A(v)$ 
  for the prime ideal  that defines the $6$-dimensional
  variety $V_A \times V_A$ in $(\PP^2)^n \times (\PP^2)^n$.
  It is generated by $2 \binom{n}{2} $ bilinear forms
  and $2 \binom{n}{3}$ trilinear forms, corresponding
  to fundamental matrices and trifocal tensors.
In light of Hilbert's Nullstellensatz,
Theorem \ref{thm:six} states that
the radical of $H_A + I_A(u) + I_A(v)$ is equal to $J_A$.
{To prove this, we need a lemma.

A point $u$ in the multiview variety $V_A\subset(\PP^2)^n$ is \emph{triangulable}
 if there exists a pair of indices $(j,k)$ such that the matrix $B^{jk}$ has rank $5$.  
 Equivalently, there exists a pair of cameras for which the unique
world point $X$ can be found by triangulation.  Algebraically,
 this means $X = \widetilde{\wedge}_5 B_{i}^{jk}$ for some
$i$.

\begin{lem}\label{l:nonTriangOn}
  All points in $V_A$ are triangulable except for the pair of epipoles, $(e_{1\leftarrow 2},e_{2\leftarrow 1})$, in the 
  case where $n=2$.  Here, the rigid multiview variety $V(J_A)$ contains the threefolds
  $V_A(u)\times (e_{1\leftarrow 2},e_{2\leftarrow 1})\,$ and $ \,(e_{1\leftarrow 2},e_{2\leftarrow 1})\times V_A(v)$.
\end{lem}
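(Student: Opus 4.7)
The plan is to split the argument into a geometric analysis of the rank of $B^{jk}$, a combinatorial argument ruling out the non-triangulable case when $n\geq 3$, and a constructive argument for the threefold containment when $n=2$.

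First I would identify the geometry of rank-deficiency of $B^{jk}$. The kernel of $B^{jk}$ consists of triples $(X,-\lambda_j,-\lambda_k)$ with $A_jX=\lambda_j u_j$ and $A_k X=\lambda_k u_k$, i.e., $X$ lies on both back-projected lines. So triangulability is equivalent to saying these two back-projected lines meet in a unique point of $\PP^3$. I would then show that this fails exactly when $(u_j,u_k)=(e_{j\leftarrow k},e_{k\leftarrow j})$: in that case both back-projected lines collapse onto the baseline $\beta_{jk}$, giving a one-parameter family of solutions and hence $\rk(B^{jk})=4$. If only one of the two, say $u_j=e_{j\leftarrow k}$ but $u_k\neq e_{k\leftarrow j}$, then the back-projected line of $u_j$ is $\beta_{jk}$ while that of $u_k$ is a distinct line through $f_k$; these meet only at $f_k$, so the kernel remains one-dimensional and triangulation still succeeds (returning $X=f_k$).

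Next, for $n\geq 3$, I would argue by contradiction: suppose $u\in V_A$ is non-triangulable. Then for every pair $(j,k)$ the previous step forces $u_j=e_{j\leftarrow k}=A_j f_k$. Fixing $j$ and two distinct indices $k,l\neq j$, this gives $A_jf_k$ proportional to $A_jf_l$ in $\PP^2$, hence $f_k-\alpha f_l\in \ker A_j=\langle f_j\rangle$ for some scalar $\alpha$. Thus $f_j,f_k,f_l$ are collinear in $\PP^3$, contradicting the general-position assumption on focal points. So triangulability holds on all of $V_A$ when $n\geq 3$.

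For $n=2$, the only non-triangulable point is the pair of epipoles by the first step, and it lies in $V_A$ since the entire baseline $\beta_{12}$ (minus the focal points) projects to $(e_{1\leftarrow 2},e_{2\leftarrow 1})$. For the threefold inclusion, I would construct preimages under $\psi_A$ directly: pick a general $X\in\PP^3$ with $\phi_A(X)=u=(A_1X,A_2X)$ so that $u$ ranges over a Zariski dense subset of $V_A$, and parameterize the baseline as $Y(t)=f_1+t(f_2-f_1)$. Substituting into $Q(X,\cdot)$ yields a quadratic equation in $t$, which always has a root over $\CC$. For any such $Y$ one has $A_iY$ proportional to $e_{i\leftarrow 3-i}$, so
\[
\psi_A(X,Y)\,=\,\bigl(u,\,(e_{1\leftarrow 2},e_{2\leftarrow 1})\bigr)\,\in\,V(J_A).
\]
Zariski-closing in the $u$-coordinate gives $V_A(u)\times(e_{1\leftarrow 2},e_{2\leftarrow 1})\subset V(J_A)$, and swapping the roles of $X$ and $Y$ gives the other threefold.

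The step I expect to require the most care is the rank analysis when exactly one of $u_j,u_k$ is an epipole: one must confirm that the kernel of $B^{jk}$ really is one-dimensional there, not two. The geometric argument (unique intersection at $f_k$) makes this transparent, but to be fully rigorous I would back it up by exhibiting a nonvanishing $5\times 5$ minor, using that $A_k$ has rank $3$ and $u_k\neq e_{k\leftarrow j}$ ensures that the rows involving $A_k$ and $u_k$ remain independent of those involving $A_j$ and $u_j=e_{j\leftarrow k}$.
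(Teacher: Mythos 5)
Your proof is correct and follows essentially the same strategy as the paper: analyze the rank of $B^{jk}$ via the intersection of the two back-projected lines, and obtain the threefold containment by intersecting the baseline $\beta_{12}$ with the quadric surface $X^Q=\{Y: Q(X,Y)=0\}$ for generic $X$ and then taking Zariski closure. The one place where you genuinely diverge is the $n\geq 3$ step: the paper argues in world space (for every world point $X$ there is a camera pair whose baseline avoids $X$), whereas you argue in image space (non-triangulability of $u\in V_A$ forces $u_j=e_{j\leftarrow k}$ for \emph{all} pairs, so $A_jf_k\propto A_jf_l$ yields three collinear focal points). Your version is slightly more robust, since it applies verbatim to every point of $V_A$, including points of the closure that need not be of the form $\phi_A(X)$, which the paper's phrasing quietly glosses over. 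Two small points to make explicit: (i) for $u\in V_A$ one has $\det B^{jk}=0$, so ``not rank $5$'' really does mean ``rank $\leq 4$'' and your dichotomy applies; you use this implicitly. (ii) In the threefold argument you should note, as the paper does, that genericity of $X$ guarantees the chosen root $Y\in\beta_{12}\cap X^Q$ is not a focal point, so that $\psi_A$ is defined at $(X,Y)$ and $A_iY$ is genuinely the epipole rather than $0$. Neither is a gap, just a line each to add.
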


\begin{proof}
  Let us first consider the case of $n=2$ cameras.  The first claim holds because the back-projected lines of the two
  camera images $u_1$ and $u_2$ always span a plane in $\PP^3$ except when $u_1=e_{1\leftarrow 2}$ and
  $u_2=e_{2\leftarrow 1}$.  In that case both back-projected lines agree with the common baseline~$\beta_{12}$.
  Alternatively, we can check
  algebraically that the variety defined by the $5 \times 5$-minors of the matrix $B$ consists of the
  single point $ (e_{1\leftarrow 2},e_{2\leftarrow 1})$.
  
  For the second claim, fix a generic point $X$ in $\PP^3$ and consider the surface
  \begin{equation}
  \label{eq:surface} X^Q \,\, = \,\,
  \bigl\{Y \in \PP^3 \,:\, Q(X,Y)=0 \bigr\}.
  \end{equation}
Working over $\CC$, the baseline $\beta_{12}$ is either 
tangent to $X^Q$, or it meets that quadric in exactly two points.  Our
  assumption on the genericity of $X$ implies that no 
  point in the intersection $\beta_{12}\cap X^Q$ is a focal point.
  This gives
  \begin{equation}
  \label{eq:secondclaim}
  (A_1 X, A_2 X, A_1 Y_X, A_2 Y_X) \,=\, (A_1 X, A_2 X, e_{1\leftarrow 2}, e_{2\leftarrow 1}).
  \end{equation}
  The point $(A_1 X, A_2 X)$ lies in the multiview variety $V_A(u)$.  Each generic point in $V_A(u)$ has this form for
  some $X$.  Hence (\ref{eq:secondclaim}) proves the desired inclusion $V_A(u)\times (e_{1\leftarrow 2},e_{2\leftarrow
    1}) \subset V(J_A)$.  The other inclusion $ \,(e_{1\leftarrow 2},e_{2\leftarrow 1})\times V_A(v) \subset V(J_A)$
  follows by switching the roles of $u$ and $v$.

  If there are more than two cameras then for each world point $X$, due to general position of the cameras, there is a
  pair of cameras such that $X$ avoids the pair's baseline.  This shows that each point is triangulable if $n\geq 3$.
\end{proof}

\begin{proof}[Proof of Theorem \ref{thm:six}]
It follows immediately from the definition of the ideals in question  that the following inclusion 
  of varieties holds in $(\PP^2)^n \times (\PP^2)^n $:
    \[
  V(J_A) \, \subseteq \, V\bigl(I_A(u)+I_A(v)+H_A\bigr).
  \]
  We prove the reverse inclusion.
   Let $(u,v)$ be a point in the right hand side.
  
   Suppose that $u$ and $v$ are both triangulable. Then $u$ has a unique
  preimage $X$ in $\PP^3$,  determined by
  a single camera pair $\{A_{j_1},A_{k_1}\}$. Likewise, $v$ has a unique
  preimage $Y$ in $\PP^3$, also determined by
  a single camera pair $\{A_{j_2},A_{k_2}\}$.
  There exist indices $i_1,i_2 \in \{1,2,3,4,5,6\}$ such that
  \[
  X = \widetilde{\wedge}_5 B_{i_1}^{j_1k_1} \quad \hbox{and} \quad
  Y = \widetilde{\wedge}_5 C_{i_2}^{j_2k_2} .
  \]
  Suppose that $(u,v)$ is not in $V(J_A)$.  Then $Q(X,Y) \not= 0$. This
  implies
  \[
  Q(X,Y) \,=\, T(X,X,Y,Y) \,= \,
  T \bigl( 
  \widetilde{\wedge}_5 B_{i_1}^{j_1k_1},
  \widetilde{\wedge}_5 B_{i_1}^{j_1k_1},
  \widetilde{\wedge}_5 C_{i_2}^{j_2k_2},
  \widetilde{\wedge}_5 C_{i_2}^{j_2k_2} \bigr) \,\not= \, 0,
  \]
  and hence $(u,v) \not\in V(H_A)$. This is   a contradiction to our choice of $(u,v)$.
  
  It remains to consider the case where $v$ is not triangulable.  By
  Lemma~\ref{l:nonTriangOn}, we have $n=2$, as well as
  $v=(e_{1\leftarrow 2},e_{2\leftarrow 1})$ and $(u,v)\in V(J_A)$.
  The case where $u$ is not triangulable is symmetric, and this proves
  the theorem.
\end{proof}

The equations in Theorem~\ref{thm:six}
are fairly robust, in the sense that they work as well for 
many special position scenarios.
However, when the cameras $A_1,A_2,\ldots,A_n$
are generic then the number $9 \binom{n}{2}^2$ of octics
that cut out the divisor $V(J_A)$ inside $V_A \times V_A$
can be reduced dramatically, namely to $16$.

\begin{cor} \label{cor:onlyfew}
As a subset of the $6$-dimensional ambient space $V_A \times V_A$,
the $5$-dimensional rigid multiview variety $V(J_A)$ is cut out by
$16$ polynomials of degree class $(220..220..)$.
One choice of such polynomials is given by
\[ \quad
\begin{array}{cc}
Q\bigl(\widetilde{\wedge}_5 B^{12}_{i},\widetilde{\wedge}_5 C^{12}_{k}\bigr ), &
Q\bigl(\widetilde{\wedge}_5 B^{12}_{i},\widetilde{\wedge}_5 C^{13}_{k}\bigr )\\
Q\bigl(\widetilde{\wedge}_5 B^{13}_{i},\widetilde{\wedge}_5 C^{12}_{k}\bigr ), &
Q\bigl(\widetilde{\wedge}_5 B^{13}_{i},\widetilde{\wedge}_5 C^{13}_{k}\bigr )
\end{array} \qquad
\hbox{for all $1\leq i,k\leq 2$.}
\]
\end{cor}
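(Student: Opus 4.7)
The plan is to verify both inclusions in $V(J_A) = V_A \times V_A \cap V(\text{16 octics})$. The inclusion $\subseteq$ is immediate from Theorem \ref{thm:six}: all 16 listed polynomials lie in $J_A$. For the reverse, I would take $(u,v) \in V_A \times V_A$ vanishing on the 16 octics and, following the strategy of the proof of Theorem \ref{thm:six}, aim to show $Q(X,Y) = 0$ where $X,Y \in \PP^3$ are the unique world-point preimages of $u$ and $v$ (guaranteed by Lemma \ref{l:nonTriangOn} since $n \geq 3$).

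The key algebraic observation is that whenever the $5 \times 6$ matrix $B^{jk}_i$ has full rank $5$, Cramer's rule gives $\widetilde{\wedge}_5 B^{jk}_i = \alpha^{(jk)}_i \cdot X$ for a nonzero scalar $\alpha^{(jk)}_i$; otherwise the vector is zero. An analogous statement relates $\widetilde{\wedge}_5 C^{j'k'}_l$ to $Y$ via scalars $\beta^{(j'k')}_l$. Consequently each of the 16 octics factors as
\[
Q\bigl(\widetilde{\wedge}_5 B^{j_1 k_1}_{i_1},\widetilde{\wedge}_5 C^{j_2 k_2}_{l_2}\bigr) \,=\, \bigl(\alpha^{(j_1 k_1)}_{i_1}\bigr)^2\cdot \bigl(\beta^{(j_2 k_2)}_{l_2}\bigr)^2 \cdot Q(X,Y).
\]
To force $Q(X,Y)=0$ it then suffices to exhibit one admissible choice $(j_1,k_1,i_1)$ and $(j_2,k_2,l_2)$ with $(j_1 k_1),(j_2 k_2) \in \{(1,2),(1,3)\}$ and $i_1,l_2 \in \{1,2\}$ at which $\alpha$ and $\beta$ do not vanish.

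This reduces the corollary to the following geometric claim: for every $u \in V_A$, not all four quantities $\widetilde{\wedge}_5 B^{12}_1, \widetilde{\wedge}_5 B^{12}_2, \widetilde{\wedge}_5 B^{13}_1, \widetilde{\wedge}_5 B^{13}_2$ vanish simultaneously. I would unpack $\widetilde{\wedge}_5 B^{jk}_i = 0$ geometrically: deleting row $i$ from $B^{jk}$ drops one of the three defining linear conditions of the back-projected line $L_j$, leaving a plane $\pi^{(jk)}_i$ through $f_j$ that still contains $L_j$. One checks that $\widetilde{\wedge}_5 B^{jk}_i$ vanishes precisely when $L_k \subseteq \pi^{(jk)}_i$. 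Simultaneous vanishing for $i=1$ and $i=2$ therefore forces $L_k \subseteq \pi^{(jk)}_1 \cap \pi^{(jk)}_2 = L_j$, hence $L_j = L_k = \beta_{jk}$ and $(u_j,u_k) = (e_{j\leftarrow k}, e_{k\leftarrow j})$. Applying this for both pairs $(j,k) = (1,2)$ and $(1,3)$ would give $u_1 = e_{1\leftarrow 2} = e_{1\leftarrow 3}$, i.e.\ $A_1 f_2 \sim A_1 f_3$, placing $f_1, f_2, f_3$ on a common line and contradicting the general-position assumption.

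The main obstacle is the precise characterization $\widetilde{\wedge}_5 B^{jk}_i = 0 \Leftrightarrow L_k \subseteq \pi^{(jk)}_i$, which rests on a careful analysis of the kernel of the (possibly rank-deficient) $5 \times 6$ submatrix, together with the observation that $\pi^{(jk)}_1 \cap \pi^{(jk)}_2 = L_j$ under the generic-camera assumption. Once that is established, symmetry yields the analogous statement for $v$, and combining the two choices pins down a single octic among the 16 that is a nonzero scalar multiple of $Q(X,Y)$, forcing $(u,v) \in V(J_A)$ and completing the proof.
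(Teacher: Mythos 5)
Your overall strategy coincides with the paper's: reduce the reverse inclusion to showing that for every $u\in V_A$ at least one of the four vectors $\widetilde{\wedge}_5 B^{12}_{1}$, $\widetilde{\wedge}_5 B^{12}_{2}$, $\widetilde{\wedge}_5 B^{13}_{1}$, $\widetilde{\wedge}_5 B^{13}_{2}$ is a nonzero multiple of $X$ (likewise for $v$), and derive the final contradiction from $e_{1\leftarrow 2}=e_{1\leftarrow 3}$, which would force $f_1,f_2,f_3$ to be collinear. The gap lies in your route to the non-vanishing claim, specifically in the asserted identity $\pi^{(jk)}_1\cap\pi^{(jk)}_2=L_j$. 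Deleting row $1$ (resp.\ row $2$) of the $A_j$-block and eliminating $\lambda_j$ from the two remaining rows shows that $\pi^{(jk)}_1$ and $\pi^{(jk)}_2$ are the pullbacks under $A_j$ of the lines in $\PP^2$ joining $u_j$ to $(1{:}0{:}0)$ and to $(0{:}1{:}0)$, respectively. If $u_j$ lies on the coordinate line through these two points, the two image lines coincide, hence $\pi^{(jk)}_1=\pi^{(jk)}_2$ is a single plane and the intersection is not $L_j$. This degeneration is a condition on the image point $u_j$, not on the cameras, so it occurs on a positive-dimensional subset of $V_A$ for every choice of $A$; genericity of the cameras does not exclude it, and since the corollary is a statement about every point of the vanishing locus, generic points do not suffice. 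For such $u$ your argument only yields $L_k\subseteq\pi^{(jk)}_1$, a line contained in a plane through $f_j$, which is not by itself a contradiction.

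The step can be repaired, and the repair is essentially the paper's argument. The containment $L_k\subseteq\pi^{(jk)}_1=A_j^{-1}(\{y_2=0\})$ would force $f_k\in A_j^{-1}(\{y_2=0\})$, i.e.\ the vanishing of the last coordinate of the epipole $e_{j\leftarrow k}$; this is a non-generic condition on the cameras, equivalent to the singularity of the $4\times4$ matrix formed by the third row of $A_j$ stacked on top of $A_k$. The paper invokes exactly this invertibility, but in a way that covers all $u$ uniformly: the bottom $4\times6$ block of $B^{jk}$ (third row of the $A_j$-block together with the $A_k$-block) always has rank $4$, so whenever $\rk(B^{jk})=5$ one of the two top rows lies outside the span of the bottom four and the corresponding $B^{jk}_i$ with $i\in\{1,2\}$ has rank $5$; the epipole/collinearity argument then rules out $\rk(B^{12})=\rk(B^{13})=4$. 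Note also that you leave the equivalence ``$\widetilde{\wedge}_5 B^{jk}_i=0$ iff $L_k\subseteq\pi^{(jk)}_i$'' unproved and yourself flag it as the main obstacle; it needs similar care (the vector $\wedge_5 B^{jk}_i$ vanishes exactly when $\rk(B^{jk}_i)\leq4$, and extra kernel directions with $X=0$ appear when $u_j$ is a coordinate point). As written, the proposal therefore does not yet establish the corollary for all points of $V_A\times V_A$.
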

\begin{proof}
  First we claim that for each triangulable point $u$ at least one of
  the matrices $B^{12}$ or $B^{13}$ has rank $5$, and the same for $v$
  with $C^{12}$ or $C^{13}$.  We prove this by contradiction. By
  symmetry between $u$ and $v$, we can assume that
  $\rk(B^{12})=\rk(B^{13})=4$. Then $u_3=e_{3\leftarrow 1}$,
  $u_2=e_{2\leftarrow 1}$, and $u_1=e_{1\leftarrow 2}=e_{1\leftarrow
    3}$.  However, this last equality of the two epipoles is a
  contradiction to the hypothesis that the focal points of the cameras
  $A_1,A_2,A_3$ are not collinear.

  Next we claim that if $B^{12}$ has rank $5$ then at least one of the
  submatrices $B^{12}_{1}$ or $B^{12}_{2}$ has rank $5$, and the same
  for $B^{13}$, $C^{12}$ and $C^{13}$.  Note that the bottom $4
  {\times} 6$ submatrix of $B^{12}$ has rank $4$, since the first four
  columns are linearly independent, by genericity of $A_1$ and $A_2$.
  The claim follows.
\end{proof}

\section{Other Constraints, More Points, and No Labels}

In this section we discuss several extensions of
our results. A first observation is that there was
nothing special about the constraint $Q$ in
(\ref{eq:biquadratic}). For instance, fix positive
integers $d$ and $e$, and let
$Q(X,Y)$ be any irreducible polynomial that is bihomogeneous of degree $(d,e)$.
Its variety $V(Q)$ is a hypersurface of degree $(d,e)$ in $\PP^3 \times \PP^3$.
The following analogue to Theorem \ref{thm:six} holds, if we define the map $\psi_A$ as in (\ref{eq:rigidMap}).

\begin{thm} \label{thm:seven} The closure of the image of the map $\psi_A$ is cut out in $V_A \times V_A $ by $\,9
  \binom{n}{2}^2 $ polynomials of degree class $(d,d,0,\ldots,e,e,0,\ldots)$. In other words, the equations coming from
  any two pairs of cameras suffice set-theoretically.
\end{thm}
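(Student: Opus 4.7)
The plan is to transport the proof of Theorem~\ref{thm:six} essentially verbatim, after replacing the biquadratic tensor $T$ from Section~\ref{sec:234} by the polarization of the given bihomogeneous polynomial $Q$. Since $Q$ has bidegree $(d,e)$, there is a unique $(d{+}e)$-linear form $T$, symmetric in its first $d$ and last $e$ entries, with $Q(X,Y)=T(X,\ldots,X,Y,\ldots,Y)$ ($d$ copies of $X$ and $e$ copies of $Y$). Substituting the Cramer's rule vectors $\widetilde{\wedge}_5 B^{j_1k_1}_{i}$ into the first $d$ slots and $\widetilde{\wedge}_5 C^{j_2k_2}_{k}$ into the last $e$ produces a polynomial in $\RR[u,v]$ of multidegree $(d,d,0,\ldots,e,e,0,\ldots)$. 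As in Corollary~\ref{cor:mingensH_A}, I would take the repeated $d$-fold entry to be a single vector $\widetilde{\wedge}_5 B^{j_1k_1}_{i}$ and the repeated $e$-fold entry to be a single $\widetilde{\wedge}_5 C^{j_2k_2}_{k}$ with $i,k\in\{1,2,3\}$, and let $(j_1,k_1)$ and $(j_2,k_2)$ range over pairs of cameras, giving the advertised $9\binom{n}{2}^2$ polynomials. Each of them vanishes on $\overline{\im(\psi_A)}$ by construction.

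For the reverse inclusion, I would follow the proof of Theorem~\ref{thm:six} step by step. Triangulability is a property of the multiview variety $V_A$ alone and is insensitive to the choice of constraint, so Lemma~\ref{l:nonTriangOn} still applies. Take $(u,v)\in V_A\times V_A$ at which all new polynomials vanish. If both $u$ and $v$ are triangulable, then they admit unique world-point preimages $X=\widetilde{\wedge}_5 B^{j_1k_1}_{i_1}$ and $Y=\widetilde{\wedge}_5 C^{j_2k_2}_{i_2}$, and the vanishing hypothesis forces $Q(X,Y)=T(X,\ldots,X,Y,\ldots,Y)=0$, so $(X,Y)\in V(Q)$ and $(u,v)=\psi_A(X,Y)\in\overline{\im(\psi_A)}$. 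The remaining non-triangulable case, by Lemma~\ref{l:nonTriangOn}, occurs only when $n=2$ and, up to swapping $u$ and $v$, $v=(e_{1\leftarrow 2},e_{2\leftarrow 1})$; here I would reprove the second claim of Lemma~\ref{l:nonTriangOn} for general $Q$.

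The one place where the generality of $Q$ actually enters, and the main (mild) obstacle in the argument, is this latter claim. For generic $X\in\PP^3$ the hypersurface $X^Q=\{Y\in\PP^3:Q(X,Y)=0\}$ has degree $e\geq 1$ and so meets the baseline $\beta_{12}$ in at least one point $Y_X$ over $\CC$; genericity of $X$ ensures $Y_X$ is not a focal point. Then $\psi_A(X,Y_X)=(A_1X,A_2X,e_{1\leftarrow 2},e_{2\leftarrow 1})$ shows $V_A(u)\times\{(e_{1\leftarrow 2},e_{2\leftarrow 1})\}\subseteq\overline{\im(\psi_A)}$, and the symmetric inclusion is analogous. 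Apart from this short transversality check, no new ingredient beyond Theorem~\ref{thm:six} is needed.
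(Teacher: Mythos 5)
Your proposal matches the paper's proof of Theorem~\ref{thm:seven}, which is exactly the reduction you describe: replace $T$ by the polarization of $Q$ living in $\Sym_d(\RR^4)\otimes\Sym_e(\RR^4)$, observe that the resulting polynomials have degree $(d,d,0,\ldots,e,e,0,\ldots)$, and rerun the proof of Theorem~\ref{thm:six}. You also isolate precisely the one point the paper itself flags as needing attention, namely that the surface $X^Q$ now has degree $e$ in $\PP^3$ and still meets the baseline $\beta_{12}$, so the non-triangulable case of Lemma~\ref{l:nonTriangOn} goes through.
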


\begin{proof}
The tensor $T$  that represents $Q$ now lives in
$\Sym_d(\RR^4) \otimes \Sym_e(\RR^4)$.
The polynomial  $(\ref{eq:zweizweizweizwei})$ vanishes
on the image of $\psi_A$ and has degree $(d,d,e,e)$.
The proof of Theorem~\ref{thm:six} remains valid.
The surface $X^Q$  in (\ref{eq:surface})
is irreducible of degree $e$ in $\PP^3$.
These polynomials cut out that image inside $V_A \times V_A$.
\end{proof}

\begin{rem}
In the generic case,
we can replace $9\binom{n}{2}^2$ by 16,
as in Corollary~\ref{cor:onlyfew}.
\end{rem}

Another natural generalization is to consider $m$ world points
$X_1,\ldots,X_m$ that are linked by one or several
constraints in $(\PP^3)^m$. Taking images with $n$
cameras, we obtain a variety $V(J_A)$ which lives in $(\PP^2)^{mn}$.
 For instance, if $m=4$ and $X_1,X_2,X_3,X_4$ 
are constrained to lie on a plane in $\PP^3$,
 then $\,Q = {\rm det}(X_1,X_2,X_3,X_4)\,$ and
  $V(J_A)$ is a variety of dimension $11$ in $(\PP^2)^{4n}$.
  Taking $ 6 {\times} 6$-matrices $B,C,D,E$ as in 
(\ref{eq:triangulation}) for the four points, we then form
\begin{equation}
\label{eq:fourbyfour}
\quad  \det \bigl(\widetilde{\wedge}_5 B_i,
 \widetilde{\wedge}_5 C_j,
  \widetilde{\wedge}_5 D_k,
\widetilde{\wedge}_5 E_l \bigr)\qquad \text{for all } 1\leq i,j,k,l\leq 6.
\end{equation}
For $n=2$ we verified with \macaulay that the prime ideal $J_A$ is generated by $16$ of these determinants, along with
the four bilinear forms for ${V_A}^4$.

\begin{prop}
  The variety $V(J_A)$ is cut out in ${V_A}^4 $ by the
   $\,16 \binom{n}{2}^4$ polynomials
    from~{\rm (\ref{eq:fourbyfour})}.  In other words, the equations coming
  from any two pairs of cameras suffice set-theoretically.
\end{prop}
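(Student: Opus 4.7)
The plan is to mimic the proof of Theorem~\ref{thm:six}, with the biquadratic form $Q$ there replaced by the multilinear form $\det(X_1,X_2,X_3,X_4)$ here. Let $G_A$ denote the ideal generated by the $16\binom{n}{2}^4$ polynomials in~(\ref{eq:fourbyfour}) together with the four copies of $I_A$ defining $V_A^4$. The inclusion $V(J_A)\subseteq V(G_A)$ is immediate from the construction: each Cramer row $\widetilde{\wedge}_5 B_i$ equals the world-point coordinate vector up to a nonzero scalar, so under $\psi_A$ each determinant in~(\ref{eq:fourbyfour}) pulls back to a nonzero multiple of $\det(X_1,X_2,X_3,X_4)$, which vanishes on $V(Q)$.

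For the reverse inclusion, I would first dispatch the case $n=2$ by invoking the direct \macaulay computation mentioned immediately before the proposition. For $n\geq 3$, I would fix a point $(u^{(1)},u^{(2)},u^{(3)},u^{(4)})\in V_A^4$ annihilated by all polynomials in~(\ref{eq:fourbyfour}). By Lemma~\ref{l:nonTriangOn}, every point of $V_A$ is triangulable when $n\geq 3$, so for each $s\in\{1,2,3,4\}$ I would choose a camera pair $(j_s,k_s)$ and an index $i_s$ such that $X_s := \widetilde{\wedge}_5 (B_s)^{j_s k_s}_{i_s}$ is nonzero and represents the unique preimage of $u^{(s)}$ in $\PP^3$. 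Because $Q$ is multilinear in its four arguments, substituting these Cramer expressions yields precisely one of the polynomials in~(\ref{eq:fourbyfour}), which vanishes by hypothesis. Hence $\det(X_1,X_2,X_3,X_4)=0$, the four world points are coplanar, and $(u^{(1)},\ldots,u^{(4)})=\psi_A(X_1,\ldots,X_4)\in V(J_A)$.

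The main obstacle I anticipate is not the logical skeleton above but the index-reduction underlying the count $16\binom{n}{2}^4$, rather than the naive $6^4\binom{n}{2}^4$. This parallels Corollary~\ref{cor:onlyfew}: for each matrix $B^{j_s k_s}_s$ of rank $5$, the bottom $4\times 6$ block has rank $4$ by genericity of the cameras, forcing at least one of $i_s\in\{1,2\}$ to yield a nonzero Cramer vector. With this observation, exactly $2^4=16$ index combinations suffice for each selection of the four camera pairs, matching the stated count. The argument then goes through uniformly for $n\geq 3$, with the $n=2$ boundary case absorbing the small remaining computational burden and obviating the kind of non-triangulable epipole analysis that appeared in the proof of Theorem~\ref{thm:six}.
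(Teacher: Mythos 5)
Your argument is correct, and its core --- reduce to triangulable image tuples, recover each world point from a rank-$5$ Cramer submatrix with row index in $\{1,2\}$, and read off coplanarity from the vanishing determinant --- is exactly the paper's, whose proof simply states that the argument of Theorem~\ref{thm:six} remains valid; your justification of the count $16\binom{n}{2}^4$ via the rank-$4$ bottom block is the same device as in Corollary~\ref{cor:onlyfew}. The one place you genuinely diverge is the non-triangulable case. You absorb all of $n=2$ into the \macaulay computation quoted just before the proposition; since that computation asserts ideal-theoretic (hence set-theoretic) generation, this is logically sufficient for generic cameras. The paper instead keeps the uniform geometric route of Lemma~\ref{l:nonTriangOn}: the only new ingredient in its proof is the observation that the plane $(X_i,X_j,X_k)^Q$ spanned by three generic world points meets the baseline $\beta_{12}$ in a point, whose two images are the epipoles, so that loci such as $V_A\times V_A\times V_A\times(e_{1\leftarrow 2},e_{2\leftarrow 1})$ already lie in $V(J_A)$ and the factor that fails to be triangulable is accounted for. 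What the paper's version buys is independence from the $n=2$ computation (which it cites only for the stronger ideal-generation claim) and robustness for special cameras whose focal points are merely in general position; what yours buys is brevity, at the cost of resting the entire $n=2$ case on a verification carried out for random camera samples.
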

\begin{proof}
  Each polynomial~(\ref{eq:fourbyfour}) is in $J_A$.  The proof of Theorem~\ref{thm:six} remains valid.  The planes
  $(X_i,X_j,X_k)^Q$ intersect the baseline $\beta_{12}$ in one point each.
\end{proof}
\smallskip

To continue the theme of rigidity, we may
impose distance constraints on pairs
of points. Fixing a nonzero distance $d_{ij} $
between points  $i$ and $j$ gives
\begin{small}
\[
 Q_{ij} =     (X_{i0} X_{j3} - X_{j0} X_{i3})^2 
+ (X_{i1} X_{j3} - X_{j1} X_{i3})^2 
+ (X_{i2} X_{j3} - X_{j2} X_{i3})^2 
- d_{ij}^2 X_{i3}^2 X_{j3}^2.
\]
\end{small}
We are interested in the image of the variety
$\mathcal{V} = V(Q_{ij}: 1 \leq i < j \leq m)$ 
under the multiview map $\psi_A$ that takes
 $(\PP^3)^m$  to $(\PP^2)^{mn}$.
For instance, for $m=3$, we consider the
variety $\mathcal{V} = V(Q_{12},Q_{13},Q_{23})$ in $(\PP^3)^3$,
and we seek the equations for its image 
under the multiview map $\psi_A$ into $(\PP^2)^{3n}$. 
Note that $\mathcal{V}$ has dimension $6$, unless
we are in the collinear case. Algebraically,
\begin{equation}
\label{eq:threecollinear} \,\,
(d_{12}+d_{13}+d_{23})
(d_{12}+d_{13}-d_{23})
(d_{12}-d_{13}+d_{23})
(-d_{12}+d_{13}+d_{23}) \,\,=\,\, 0.
\end{equation}
If this holds then ${\rm dim}(\mathcal{V}) = 5$.
The same argument as  in Theorem \ref{thm:six} yields:

\begin{cor}  The rigid multiview variety  $\overline{\psi_A(\mathcal{V})}$
has dimension six, unless (\ref{eq:threecollinear}) holds,
in which case the dimension is five.
It has real points if and only if $d_{12}, d_{13}, d_{23}$ satisfy the triangle inequality.  It is cut out in ${V_A}^3$ by $27\binom{n}{2}^2$ biquadratic equations,
coming from the $9 \binom{n}{2}^2$ equations for any two of the three points.
\end{cor}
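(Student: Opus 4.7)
The plan is to verify the three assertions in sequence: the dimension count, the reality criterion, and the set-theoretic description by equations.

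For the dimension, I would parametrize $\mathcal{V}$ affinely. Place $X_1 \in \RR^3$ freely (three degrees of freedom), then $X_2$ on the sphere of radius $d_{12}$ around $X_1$ (two more), and finally $X_3$ on the intersection of two spheres of radii $d_{13}$ and $d_{23}$ centered at $X_1$ and $X_2$. This intersection is a circle whenever the three distances form a non-degenerate triangle, and collapses to a single point exactly when one factor in (\ref{eq:threecollinear}) vanishes. The same count over $\CC$ yields $\dim \mathcal{V} = 6$ in the generic case and $\dim \mathcal{V} = 5$ in the collinear case. Since the focal points of $A$ are in general position, the map $\phi_A$ is birational onto $V_A$, so triangulation via (\ref{eq:X}) uniquely recovers each world point $X_i$ from its image $u^{(i)}$; consequently $\psi_A$ is birational onto its image and $\dim \overline{\psi_A(\mathcal{V})} = \dim \mathcal{V}$.

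The reality statement is classical: three real points with prescribed pairwise distances $d_{12}, d_{13}, d_{23}$ exist in $\RR^3$ if and only if every factor of (\ref{eq:threecollinear}) is nonnegative, that is, the triangle inequality holds. Applying any real $\psi_A$ then produces real image points, giving one direction, while conversely a real point of $\overline{\psi_A(\mathcal{V})}$ in the generic (triangulable) stratum admits real preimages $X_1, X_2, X_3$ by (\ref{eq:X}), and these necessarily satisfy the triangle inequality.

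For the set-theoretic description, I would apply Theorem \ref{thm:six} separately to each of the three pairs $\{i,j\} \subset \{1,2,3\}$ using the biquadratic constraint $Q_{ij}$. For each pair this yields $9 \binom{n}{2}^2$ octic polynomials in the $2n$ image coordinates of $(X_i, X_j)$ which, together with the bilinear and trilinear generators of $I_A(u_i) + I_A(u_j)$, cut out the image of $V(Q_{ij})$ inside $V_A \times V_A$. Pulling these equations back along the projection $V_A^3 \to V_A \times V_A$ onto the two relevant blocks gives $9\binom{n}{2}^2$ equations per pair, hence $27\binom{n}{2}^2$ in total. A point of $V_A^3$ satisfies all of them precisely when the uniquely triangulated world points $X_1, X_2, X_3$ satisfy $Q_{ij}(X_i, X_j) = 0$ for every pair, which is exactly the defining condition of $\mathcal{V}$. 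The main subtlety will be the treatment of non-triangulable points when $n = 2$, handled pair by pair exactly as in the proof of Theorem~\ref{thm:six} via Lemma~\ref{l:nonTriangOn}; for $n \geq 3$, general position of the focal points guarantees triangulability and the argument goes through directly.
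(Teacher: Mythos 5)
Your overall route is the same as the paper's: the paper gives no independent argument for this corollary and simply invokes ``the same argument as in Theorem~\ref{thm:six}'', which is what you spell out pair by pair. Two of your steps, however, do not survive scrutiny.

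First, the dimension count. You assert that ``the same count over $\CC$'' gives $\dim\mathcal{V}=5$ in the collinear case. That cannot be right: $\mathcal{V}$ is cut out by three equations in a $9$-dimensional ambient space, so by Krull's height theorem every irreducible component has dimension at least $6$. Concretely, over $\CC$ the intersection of the two spheres about $X_1$ and $X_2$ is always a conic (one sphere intersected with the radical plane); when (\ref{eq:threecollinear}) holds the spheres are tangent and this conic degenerates to a pair of complex lines through the real tangency point, but it is still a curve. Only the \emph{real} fiber collapses to a point, so the drop to dimension five is a statement about the real locus --- which is evidently how the paper means it, given that the next clause of the corollary concerns real points --- and your argument should say so rather than claim a complex count that Krull's theorem forbids.

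Second, and more seriously, the $n=2$ case of the set-theoretic statement cannot be ``handled exactly as in the proof of Theorem~\ref{thm:six} via Lemma~\ref{l:nonTriangOn}''. That proof disposes of a non-triangulable image by the second half of the lemma, which rests on the fact that for a \emph{single} world point $X$ the quadric $X^Q$ always meets the baseline, so that $V_A(u)\times(e_{1\leftarrow 2},e_{2\leftarrow 1})$ is automatically contained in $V(J_A)$ and nothing remains to be checked. For three points the analogous locus is $W=\overline{\psi(V(Q_{12}))}\times\{(e_{1\leftarrow 2},e_{2\leftarrow 1})\}$, where the images of $X_3$ are the epipoles and the first two image pairs range over the two-point rigid multiview variety for the constraint $Q_{12}$. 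Every one of the $27$ octics vanishes on $W$: those involving only the first two points vanish by Theorem~\ref{thm:six}, and those involving the third vanish identically because the corresponding $6\times 6$ matrix has rank $4$ at the epipole pair, killing all the vectors $\widetilde{\wedge}_5(\cdot)_k$. But $W$ is $5$-dimensional and is \emph{not} contained in $\overline{\psi_A(\mathcal{V})}$: the fiber of $\mathcal{V}$ over a generic $(X_1,X_2)$ is a conic in $\PP^3$, and a conic generically misses the line $\beta_{12}$, so only a $4$-dimensional subfamily of $W$ arises as a limit of images of points of $\mathcal{V}$. Hence for $n=2$ the pairwise reduction requires a genuinely new argument (or the statement must be restricted to $n\geq 3$, where every point is triangulable and your argument goes through).
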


In many computer vision applications, the $m$ world points and their
images in $\PP^2$ will be unlabeled. To study such questions, we
propose to work with the {\em unlabeled rigid multiview variety}. This
is the image of the rigid multiview variety under the quotient map
$\bigl((\PP^2)^m \bigr)^n \rightarrow\bigl( \Sym_m(\PP^2)\bigr)^n$.

Indeed, while labeled configurations in the plane are points in
$(\PP^2)^m $, unlabeled configurations are points in the {\em Chow
  variety} $\Sym_m(\PP^2)$.  This is the variety of ternary forms that
are products of $m$ linear forms (cf.~\cite[\S 8.6]{Lan}). It is
embedded in the space $ \PP^{\binom{m+2}{2}-1} $ of all ternary forms
of degree~$m$.

\begin{exmp}
\label{chow}
Let $m=n=2$. The Chow variety $\Sym_2(\PP^2)$
is the hypersurface in $\PP^5$ defined by the
determinant of a  symmetric $3 \times 3$-matrix $(a_{ij})$.
The quotient map $(\PP^2)^2 \rightarrow \Sym_2(\PP^2)
\subset \PP^5$
is given by the formulas
\[
\begin{matrix} 
a_{00} = 2 u_{10} v_{10}, & 
a_{11} = 2 u_{11} v_{11}, & 
a_{22} = 2 u_{12} v_{12}, \\
a_{01} = u_{11} v_{10}+u_{10} v_{11} , & 
a_{02} = u_{12} v_{10}+u_{10} v_{12}, & 
a_{12} = u_{12} v_{11}+u_{11} v_{12}. \\
\end{matrix}
\]
Similarly, for the two unlabeled images under the second camera we use 
\[
\begin{matrix} 
b_{00} = 2 u_{20} v_{20}, & 
b_{11} = 2 u_{21} v_{21}, & 
b_{22} = 2 u_{22} v_{22}, \\
b_{01} = u_{21} v_{20}+ u_{20} v_{21} , & 
b_{02} = u_{22} v_{20}+ u_{20} v_{22}, & 
b_{12} = u_{22} v_{21}+ u_{21} v_{22}. \\
\end{matrix}
\]
The unlabeled rigid multiview variety is the image of $V(J_A) \subset
V_A \times V_A$ under the quotient map that takes two copies of
$\,(\PP^2)^2 \,$ to two copies of $\Sym_2(\PP)^2 \subset \PP^5$.  This
quotient map is given by $(u_1,v_1) \mapsto a, \, (u_2,v_2) \mapsto
b$.

We first compute the image of $V_A \times V_A$
in $\PP^5 \times \PP^5$,
denoted $\Sym_2(V_A)$. Its ideal
has seven minimal generators,
three of degree $(1,1)$, and one each in
degrees $(3,0), (2,1), (1,2), (0,3)$.
The generators in degrees $(3,0)$ and $(0,3)$
are  ${\rm det}(a_{ij})$ and ${\rm det}(b_{ij})$.
The five others depend on the cameras $A_1, A_2$.

Now, to get equations for the unlabeled rigid multiview variety, we
intersect the ideal $J_A$ with the subring $\RR[a,b]$ of bisymmetric
homogeneous polynomials in $\RR[u,v]$. This results in nine new
generators which represent the distance constraint.  One of them is a
quartic of degree $(2,2)$ in $(a,b)$.  The other eight are quintics,
four of degree $(2,3)$ and four of degree $(3,2)$.
\end{exmp}

Independently of the specific constraints considered in this paper, it
is of interest to characterize the pictures of $m$ unlabeled points
using $n$ cameras.  This gives rise to the {\em unlabeled multiview
  variety} $\,\Sym_m(V_A) $ in $
{\bigl(}\PP^{\binom{m+2}{2}-1}{\bigr)}^{n} $. It would be desirable to
know the prime ideal of $\Sym_m(V_A) $ for any $n$ and~$m$.


\bigskip
\medskip

\noindent {\bf Acknowledgements.}\ This research is carried out in the framework of Matheon supported by Einstein Foundation Berlin. Further support by Deutsche
Forschungsgemeinschaft (Priority Program 1489: ``Experimental methods 
in algebra, geometry, and number theory'') and
the US National Science Foundation (grant DMS-1419018).


\begin{appendix}
\section{Computations}
\label{compu}
We performed several random experiments in this paper.
Our hardware was a cluster with Intel Xeon X2630v2 Hexa-Cores (2.8 GHz) and 64GB main memory per node.
The software was \macaulay, version 1.8.2.1 \cite{M2}.
All computations were single-threaded.

The tests were repeated several times with random input.
The exact running times vary, even with identical input; the Table~\ref{tab:gens} lists the average values.
It is not surprising that increasing $n$, the number of cameras, increases the running times considerably.
Therefore we adapted the number of experiments according to $n$.

For all the statements in Section~2 regarding two cameras, (i.e. $n=2$) we performed at least 1000 computations, with one exception.
The statement regarding the universal Gr\"obner basis in Remark~\ref{rem:octic} is based on 20 experiments.
Regarding three and four cameras (i.e., $n\in\{3,4\}$) we performed at least 100 computations each.
For $n=5$ we performed at least 20 computations each.
Example~\ref{chow} was checked with 50 choices of random cameras.

In Listing~\ref{algo1} we show \macaulay code which can be employed to establish Proposition~\ref{prop:n2}.
The complete code for all our results can be accessed via \url{http://www3.math.tu-berlin.de/combi/dmg/data/rigidMulti/}.

Lines 1--4 define the rings in which the computations take place.
Lines 6--7 produce random camera matrices.
Here the code shown differs slightly from the code used.
What we omitted is the extra code which checks the matrices before they are processed.
To assert general position we check that none of the minors vanish as in \cite[\S2]{AST}.
However, our experiments suggest that it suffices to check that the focal points of the cameras are in linear general position.
The multiview map $\phi_A$ from (\ref{eq:multiMap}) is encoded in lines 11--14.
Line 15 is the rigid constraint (\ref{eq:biquadratic}).
The actual computation is the elimination in line~16.
The rigid multiview ideal $J_A$ is defined in lines 17--18, and the final output are the multidegrees of $J_A$.

\begin{lstlisting}[label=algo1, caption=Compute $J_A$ for two cameras]
R1 = QQ[u_(1,0)..u_(1,2)] ** QQ[u_(2,0)..u_(2,2)] **
     QQ[v_(1,0)..v_(1,2)] ** QQ[v_(2,0)..v_(2,2)];
R2 = QQ[X_0..X_2] ** QQ[Y_0..Y_2]; 
S  = R1 ** R2;

n  = 2;
for i from 1 to n do (A_i = random(ZZ^3,ZZ^4,Height=>20););

I = ideal();
for j from 1 to n do (
   I = I + minors(2,A_j * (genericMatrix(S,X_0,3,1)||matrix{{1}})|
       genericMatrix(S,u_(j,0),3,1));
   I = I + minors(2,A_j * (genericMatrix(S,Y_0,3,1)||matrix{{1}})|
       genericMatrix(S,v_(j,0),3,1)); );
I = I + ideal((X_0-Y_0)^2 + (X_1-Y_1)^2 + (X_2-Y_2)^2-1);
I = eliminate({X_0,X_1,X_2,Y_0,Y_1,Y_2},I);

F = map(R1,S);
J = F(I);

degrees(J)
\end{lstlisting}

In these computations the world coordinates are dehomogenized by setting the last coordinate to $1$, as explained at the end of Section~\ref{sec:234}.
Notice that the code below line~7 does not need to be modified if we increase $n$.
\end{appendix}

\bigskip \bigskip \bigskip

\footnotesize 

\noindent \textbf{Authors' addresses:}

\noindent
Michael Joswig, Technische Universit\"at Berlin,  Germany,
\texttt{joswig@math.tu-berlin.de}

\noindent Joe Kileel,  University of California, Berkeley, USA,
\texttt{jkileel@math.berkeley.edu}

\noindent Bernd Sturmfels,  University of California, Berkeley, USA,
\texttt{bernd@math.berkeley.edu}

\noindent
Andr\'e Wagner, Technische Universit\"at Berlin,
Germany, \texttt{wagner@math.tu-berlin.de}

\end{document}